\newtheorem{theorem}{Theorem}[section]
\newtheorem*{theorem*}{Theorem}
\newtheorem{lemma}[theorem]{Lemma}
\newtheorem{corollary}[theorem]{Corollary}
\newtheorem{proposition}[theorem]{Proposition}
\newtheorem{definition}[theorem]{Definition}
\newtheorem{example}[theorem]{Example}
\newtheorem{remark}[theorem]{Remark}
\newcommand{\R}{\mathbb{R}}
\newcommand{\C}{\mathbb{C}}
\newcommand{\Z}{\mathbb{Z}}
\newcommand{\N}{\mathbb{N}}
\newcommand{\D}{\mathbb{D}}
\newcommand{\Sp}{\mathbb{S}}
\newcommand{\x}{\textbf {x}}
\newcommand{\T}{\widetilde}
\renewcommand{\deg}[1]{{\rm degree}(#1)}
\newcommand{\Sing}[1]{{\rm Sing}(#1)}
\begin{document}

\title[Multiplicity, regularity and blow-spherical equivalence]
{Multiplicity, regularity and blow-spherical equivalence of complex analytic sets}

\author{J. Edson Sampaio}
\address{J. Edson Sampaio - 
Departamento de Matem\'atica, Universidade Federal do Cear\'a, Av.
Humberto Monte, s/n Campus do Pici - Bloco 914, 60455-760
Fortaleza-CE, Brazil} 
 \email{edsonsampaio@mat.ufc.br}

\keywords{Multiplicity - Regularity - Blow-spherical equivalence - Complex analytic sets}

\subjclass[2010]{MSC 14B05 \and MSC 32S50 \and MSC 32S05}

\begin{abstract}
This paper is devoted to study multiplicity and regularity as well as to present some classifications of complex analytic sets. We present an equivalence for complex analytical sets, namely blow-spherical equivalence and we receive several applications with this new approach. For example, we reduce to homogeneous complex algebraic sets a version of Zariski's multiplicity conjecture in the case of blow-spherical homeomorphism, we give some partial answers to the Zariski's multiplicity conjecture, we show that a blow-spherical regular complex analytic set is smooth and we give a complete classification of complex analytic curves.
\end{abstract}

\maketitle

\section{Introduction}

Recently, L. Birbrair, A. Fernandes and V. Grandjean in \cite{BirbrairFG:2017} (see also \cite{BirbrairFG:2012}) defined a new equivalence, namely blow-spherical equivalence, to study some properties of subanalytic sets such as, for example, to generalize thick-thin decomposition of normal complex surface singularity germs introduced in \cite{BirbrairNP:2014}. We with the aim to study multiplicity and regularity as well as to present some classifications of complex analytic sets, we define a weaker variation of the equivalence presented in \cite{BirbrairFG:2017}, namely also blow-spherical equivalence. Roughly speaking, two subset germs of Euclidean spaces are called blow-spherical equivalents, if their spherical modifications are homeomorphic and the homeomorphism induces homeomorphic tangent links. This equivalence, essentially, lives between topological equivalence and subanalytic bi-Lipschitz equivalence. We receive several applications with this new approach, such as a reduction for a version of Zariski's multiplicity conjecture in the case of blow-spherical homeomorphism, as blow-spherical regular complex analytic sets are smooth and we obtain a complete classification of complex analytic curves. 

The main motivation to study about multiplicity comes from the following problem proposed, in 1971, by O. Zariski (see \cite{Zariski:1971}):
\begin{enumerate}[leftmargin=*]\label{zariski}
\item[]{\bf Question A} Let $f,g:(\C^n,0)\to (\C,0)$ be two reduced complex analytic functions. If there is a homeomorphism $\varphi:(\C^n,V(f),0)\to (\C^n,V(g),0)$, then is it $m(V(f),0)=m(V(g),0)$?
\end{enumerate} 

More than 45 years later, the question above is still unsettled. However, there exist some partial results about it, for example, R. Ephraim in \cite{Ephraim:1976a} and D. Trotman in \cite{Trotman:1977} showed that the multiplicity is a $C^1$ invariant. O. Saeki in \cite{Saeki:1988} and Stephen Yau in \cite{Yau:1988} showed that the multiplicity is an invariant of the embedded topology in the case of isolated quasihomogeneous surfaces, Greuel in \cite{Greuel:1986} and O'Shea in \cite{OShea:1987} showed that the conjecture \ref{zariski} has a positive answer in the case of quasihomogeneous hypersurface families with isolated singularities. Several other authors showed partial results about versions of the Zariski's multiplicity conjecture, more recently, we can cite \cite{Abderrahmane:2016}, \cite{Eyral:2015}, \cite{EyralR:2015} \cite{EyralR:2016}, \cite{MendrisN:2005}, \cite{PlenatT:2013}, \cite{SaiaT:2004} and \cite{TrotmanS:2016}. In order to know more about this conjecture see the survey \cite{Eyral:2007}. 

More recently, the author joint with A. Fernandes, in the paper \cite{FernandesS:2016}, proved that the multiplicity of a complex analytic surface in $\C^3$ is a bi-Lipschitz invariant and W. Neumann and A. Pichon in \cite{NeumannP:2016} showed that the multiplicity is a bi-Lipschitz invariant in the case of normal complex analytic surfaces. The bi-Lipschitz invariance of the multiplicity is an advance about a problem that has been extensively studied in recent years, the complex analytic surface classification. The work of L. Birbrair, W. Neumann and A. Pichon in \cite{BirbrairNP:2014} is the most recent significant result on the Lipschitz geometry of singularities about classification of complex analytic surfaces, more specifically: they presented a classification for surfaces with the intrinsic metric module bi-Lipschitz homeomorphisms. More about classification of complex analytic surfaces can be found in \cite{BirbrairF:2008}, \cite{BirbrairFN:2008}, \cite{BirbrairFN:2009}, \cite{BirbrairFN:2010}, \cite{BirbrairFG:2012} and \cite{BirbrairFG:2017}. We have also recent studies about classification of complex analytic curves, see for example \cite{BirbrairFG:2015b}, \cite{HefezH:2011}, \cite{HefezH:2013} and \cite{HefezHHR:2015}.

Another subject of interest to many mathematicians is to know how simple is the topology of complex analytic sets. For example, if topological regularity implies analytic regularity. In general this does not occur, but Mumford in \cite{Mumford:1961} showed a result in this direction, it was stated as follows: \emph{a normal complex algebraic surface with a simply connected link at a point $x$ is smooth in $x$}. This was a pioneer work in topology of singular algebraic surfaces.  From a modern viewpoint this result can be seen as follows:   \emph{a topologically regular normal complex algebraic surface is smooth}. Since, in $\C^3$, a surface is normal if and only if it has isolated singularities, the result can be formulated as follows: \emph{topologically regular complex surface in $\C^3$,  with isolated singularity, is smooth}. The condition of singularity to be isolated is important, since $\{(x,y,z)\in\C^3; y^2=x^3\}$ is a topologically regular surface, but it is non-smooth.  There are also examples of non smooth surfaces in $\C^4$ with topologically regular isolated singularity, for example, E. V. Brieskorn in \cite{Brieskorn:1966} showed that $\{(z_0,z_1,z_2,z_3)\in\C^4; z_0^3=z_1^2+z_2^2+z_3^2\}$ is a topologically regular surface, but it is non-smooth, as well. However, N. A'Campo in \cite{Acampo:1973} and L\^e D. T. in \cite{Le:1973} showed that if $X$ is a complex analytic hypersurface in $\C^n$ such that $X$ is a topological submanifold, then $X$ is smooth. Recently, the author in \cite{Sampaio:2016} (see also \cite{BirbrairFLS:2016}), proved a version of the Mumford's Theorem, He showed that Lipschitz regularity in complex analytic sets implies smoothness.

In this paper, we deal with blow-spherical aspects related to the above Zariski's question. More precisely, we consider the questions below:
\begin{enumerate}[leftmargin=*]\label{conjecture}
\item[]{\bf Question  A1.} Let $X,Y\subset \C^n$ be two complex analytic sets. If $X$ and $Y$ are blow-spherical homeomorphic, then is it $m(X,0)=m(Y,0)$?
\end{enumerate}
\begin{enumerate}[leftmargin=*]\label{conjectureH}
\item[]{\bf Question  A2.} Let $X,Y\subset \C^n$ be two homogeneous complex algebraic sets. If $X$ and $Y$ are blow-spherical homeomorphic, then is it $m(X,0)=m(Y,0)$?
\end{enumerate}
In the Section \ref{section:blow-spherical} we define blow-spherical equivalence and we present some properties of this equivalence. In the Subsection \ref{section:examples}, we present some examples of blow-spherical equivalences and in the Section \ref{section:counter-examples} we show that, for example, the blow-spherical equivalence is different of the topological, intrinsic bi-Lipschitz and bi-Lipschitz equivalences. The others sections are devoted for applications of the results of the Section \ref{section:blow-spherical}.

In the Section \ref{section:regularity}, we prove a version of the Mumford's Theorem. Namely, we show that if a complex analytic set $X$ is blow-spherical regular, then $X$ is smooth. No restriction on the dimension or co-dimension is needed. No restriction of singularity to be isolated is needed. How an application, we obtain the main result of \cite{BirbrairFLS:2016}, about Lipschitz regularity of complex analytic sets.

The Section \ref{section:multiplicity} is dedicated for studies about the invariance of the multiplicity. In the Subsection \ref{section:reduction}, we prove the Theorem \ref{reduction} that says: The Question A1 has a positive answer if, and only if, the Question A2 has a positive answer. 
In the Subsection \ref{subsec:mult_dim_one}, the Theorem \ref{mult_dim_one}, shows that the Question A2 has positive answer for hypersurface singularities whose your irreducible components have singular sets with dimension $\leq$ 1. In particular, in the Corollary \ref{mult_surface}, we prove the blow-spherical invariance of the multiplicity of complex analytic surface (not necessarily isolated) singularities in $\C^3$. Moreover, in the Subsection \ref{subsec:mult_aligned}, we prove that the Question A2 has a positive answer in the case of aligned singularities and in the Subsection \ref{subsec:mult_families}, we prove that the Question A2 has a positive answer in the case of families of hypersurfaces.

Finally, in the Section \ref{section:curves}, we present a complete classification for complex analytic curves in $\C^n$.

\bigskip

\noindent{\bf Acknowledgements}. I wish to thank my friends Alexandre Fernandes by incentive and interest in this research and also by your support and help, Vincent Grandjean by useful comments about the manuscript and Lev Birbrair by incentive and interest in this research.

\section{Preliminaries}\label{section:preliminaries}
Let $f\colon (\C^n,0)\to (\C,0)$ be the germ of a reduced analytic function at origin with $f\not\equiv 0$. Let $(V(f),0)$ be the germ of the zero set of $f$ at origin. We recall \emph{the multiplicity of} $V(f)$ at origin, denoted by $m(V(f),0)$, is defined as following: we write
$$f=f_m+f_{m+1}+\cdots+f_k+\cdots$$ where each $f_k$ is a homogeneous polynomial of degree $k$ and $f_m\neq 0$. Then, $$m(V(f),0):= m,$$ 
see, for example, \cite{Chirka:1989} for a definition of multiplicity in high co-dimension.
\begin{definition}
Let $A\subset \R^n$ be a subanalytic set such that $x_0\in \overline{A}$.
We say that $v\in \R^n$ is a tangent vector of $A$ at $x_0\in\R^n$ if there is a sequence of points $\{x_i\}\subset A\setminus \{0\}$ tending to $x_0\in \R^n$ and there is a sequence of positive numbers $\{t_i\}\subset\R^+$ such that 
$$\lim\limits_{i\to \infty} \frac{1}{t_i}(x_i-x_0)= v.$$
Let $C(A,x_0)$ denote the set of all tangent vectors of $A$ at $x_0\in \R^n$. We call $C(A,x_0)$ the {\bf tangent cone} of $A$ at $x_0$.
\end{definition}
Notice that $C(A,x_0)$ is the cone $C_3(A,x_0)$ as defined by Whitney (see \cite{Whitney:1972}).

\begin{remark}
Follows from the curve selection lemma for subanalytic sets that, if $A\subset \R^n$ is a subanalytic set and $x_0\in \overline{A}$ is a non-isolated point, then 
\begin{eqnarray*}
C(A,x_0)=\{v;\, \exists\, \alpha:[0,\varepsilon )\to \R^n\,\, \mbox{s.t.}\,\, \alpha(0)=x_0,\, \alpha((0,\varepsilon ))\subset A\,\, \mbox{and}\,\,\\ 
\alpha(t)-x_0=tv+o(t)\}. 
\end{eqnarray*}
\end{remark}
\begin{remark}
If $A\subset \C^n$ is a complex analytic set such that $x_0\in A$ then $C(A,x_0)$ is the zero set of a set of homogeneous polynomials (See \cite{Whitney:1972}, Chapter 7, Theorem 4D). In particular, $C(A,x_0)$ is the union of complex line passing through at the origin.
\end{remark}
Another way to present the tangent cone of a subset $X\subset\R^{n}$ at the origin $0\in\R^{n}$ is via the spherical blow-up of $\R^{n}$ at the point $0$. Let us consider the {\bf spherical blowing-up} (at origin) of $\R^{n}$ 
$$
\begin{array}{ccl}
\rho_n\colon\mathbb{S}^{n-1}\times [0,+\infty )&\longrightarrow & \R^{n}\\
(x,r)&\longmapsto &rx
\end{array}
$$

Note that $\rho_n\colon\mathbb{S}^{n-1}\times (0,+\infty )\to \R^{n}\setminus \{0\}$ is a homeomorphism with inverse mapping $\rho_n^{-1}\colon\R^{n}\setminus \{0\}\to \mathbb{S}^{n-1}\times (0,+\infty )$ given by $\rho_n^{-1}(x)=(\frac{x}{\|x\|},\|x\|)$. The {\bf strict transform} of the subset $X$ under the spherical blowing-up $\rho=\rho_n$ is $X':=\overline{\rho_n^{-1}(X\setminus \{0\})}$. The subset $X'\cap (\mathbb{S}^{n-1}\times \{0\})$ is called the {\bf boundary} of $X'$ and it is denoted by $\partial X'$. 
\begin{remark}
{\rm If $X\subset \R^{n}$ is a subanalytic set, then $\partial X'=\mathbb{S}_0X\times \{0\}$, where $\mathbb{S}_0X=C(X,0)\cap \mathbb{S}^{n-1}$.}
\end{remark}

\begin{definition}
Let $X\subset \R^n$ be a subanalytic set such that $0\in X$. We say that $x\in\partial X'$ is {\bf simple point of $\partial X'$}, if there is an open $U\subset \R^{n+1}$ with $x\in U$ such that:
\begin{itemize}
\item [a)] the connected components of $(X'\cap U)\setminus \partial X'$, say $X_1,..., X_r$, are topological manifolds with $\dim X_i=\dim X$, for all $i=1,...,r$;
\item [b)] $(X_i\cup \partial X')\cap U$ is a topological manifold with boundary, for all $i=1,...,r$;. 
\end{itemize}
Let $Smp(\partial X')$ be the set of all simple points of $\partial X'$.
\end{definition}
\begin{definition}
Let $X\subset \R^n$ be a subanalytic set such that $0\in X$.
We define 
$$k_X:Smp(\partial X')\to \N,$$ 
with $k_X(x)$ being the number of components of $\rho^{-1}(X\setminus\{0\})\cap U$, for $U$ an open sufficiently small with $x\in U$.
\end{definition}
For to know about subanalytic sets, see, for example, \cite{BierstoneM:1988}. 
\begin{remark}
It is clear that the function $k_X$ is locally constant. In fact, $k_X$ is constant in each connected component $C_j$ of $Smp(\partial X')$. Then, we define $k_X(C_j):=k_X(x)$ with $x\in C_j \cap Smp(\partial X')$.
\end{remark}
\begin{remark}
In the case that $X$ is a complex analytic set, there is a complex analytic set $\sigma$ with $\dim \sigma <\dim X$, such that $X_j\setminus \sigma$ intersect only one connected component $C_i$ (see \cite{Chirka:1989}, pp. 132-133), for each irreducible component $X_j$ of tangent cone $C(X,0)$, then we define $k_X(X_j):=k_X(C_i)$.
\end{remark}
\begin{remark}
The number $k_X(C_j)$ is equal the $n_j$ defined by Kurdyka e Raby \cite{Kurdyka:1989}, pp. 762 and also is equal the $k_j$ defined by Chirka in \cite{Chirka:1989}, pp. 132-133, in the case that $X$ is a complex analytic set. 
\end{remark}

We also remember a very useful result proved by Y.-N. Gau and J. Lipman in the paper \cite{Gau-Lipman:1983}.
\begin{lemma}[\cite{Gau-Lipman:1983}, p. 172, Lemma]\label{irredutivel}
Let $\varphi:A\to B$ be a homeomorphism between two complex analytic sets. If $X$ is an irreducible component of $A$, then $\varphi(X)$ is an irreducible component of $B$.
\end{lemma}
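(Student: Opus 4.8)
The plan is to show that the decomposition of a complex analytic set into irreducible components is \emph{intrinsically topological}, and then to invoke that a homeomorphism must carry any topologically defined structure over. Thus it suffices to describe the family of irreducible components of $A$ by a construction that refers only to the topology of $A$; applying the same construction to $B$ and using that $\varphi$ is a homeomorphism will force $\varphi(X)$ to be an irreducible component of $B$.

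First I would collect two ingredients from the local theory of complex analytic sets. (i) The local complex dimension is a topological invariant: at each $x\in A$ one has $\dim_x A=\frac{1}{2}\dim_x^{\mathrm{top}}A$, where $\dim^{\mathrm{top}}$ denotes the local covering dimension, so the function $x\mapsto\dim_x A$ is preserved by $\varphi$. (ii) If $A$ is irreducible, then $\Reg{A}$ is connected and dense in $A$, and $\Sing{A}$ is a complex analytic subset with $\dim\Sing{A}<\dim A$. Using (i) I then peel off by dimension: with $D=\dim A$ and $A^{(d)}$ the union of the $d$-dimensional components, the set $\{x:\dim_x A=D\}$ is topologically determined, and since regular points of the top part are dense in $A^{(D)}$ while the generic point of each lower component avoids $A^{(D)}$, one gets $A^{(D)}=\overline{\{x:\dim_x A=D\}}$ and $\bigcup_{d<D}A^{(d)}=\overline{A\setminus A^{(D)}}$. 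Iterating, every pure-dimensional piece $A^{(d)}$ is a topologically defined subset, hence $\varphi(A^{(d)})=B^{(d)}$.

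It remains to treat a fixed pure $d$-dimensional piece, which I relabel $A$. Let $M(A)$ be the set of points possessing a neighborhood in $A$ homeomorphic to $\R^{2d}$; this is open and manifestly topological. The main step is the claim that the closures of the connected components of $M(A)$ are exactly the irreducible components. Indeed $\Reg{A}\subseteq M(A)$, and $S:=M(A)\setminus\Reg{A}\subseteq\Sing{A}$ has real codimension $\ge 2$ in the $2d$-manifold $M(A)$; since removing a closed set of codimension $\ge 2$ does not disconnect a connected manifold, each connected component $C$ of $M(A)$ has $C\setminus S$ connected, so $C\setminus S$ lies in a single $\Reg{A_i}$. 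Conversely each connected set $\Reg{A_i}$ lies in one component, yielding a bijection $C\leftrightarrow A_i$ with $\overline{C}=A_i$.

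Because this entire description of the irreducible components is phrased purely in terms of the topology of $A$ (local dimension, manifold points, connected components, and closures), the homeomorphism $\varphi$ transports it to the analogous description for $B$; in particular $\varphi(X)$ is an irreducible component of $B$. The step I expect to be most delicate is the codimension-$2$ connectedness argument together with the verification that $M(A)\setminus\Reg{A}$ is genuinely contained in $\Sing{A}$ and therefore low-dimensional — that is, ruling out ``spurious'' manifold points that secretly glue two components along their intersection. This is precisely where the complex-analytic hypothesis (rather than merely subanalytic) is essential, since it guarantees that $\Sing{A}$ has complex codimension $\ge 1$.
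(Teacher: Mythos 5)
You should first be aware that the paper contains no proof of this statement to compare against: Lemma \ref{irredutivel} is quoted verbatim, with proof, from Gau--Lipman \cite{Gau-Lipman:1983} and used as a black box, so your proposal can only be measured against the cited source and on its own merits. On its own merits it is correct, and it follows the classical strategy of exhibiting the decomposition into irreducible components as a purely topological datum. Both pillars are sound: the local complex dimension is a topological invariant ($\dim_x A=\frac{1}{2}\dim_x^{\mathrm{top}}A$ for complex analytic sets, by local triangulability), and a closed subset of covering dimension at most $2d-2$ cannot disconnect a connected $2d$-manifold, which is exactly what disposes of the delicate point you flag at the end --- points such as the origin of $\{y^2=x^3\}$, which are topological manifold points yet lie in $\Sing{A}$. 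This is also where complex analyticity genuinely enters, since it forces $\Sing{A}$ to have real codimension at least $2$; for real analytic sets the statement fails.

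Two small repairs, neither a genuine gap. In the pure-dimensional step, the connected components of $\Reg{A}$ are the sets $R_i=\Reg{A_i}\setminus\bigcup_{j\neq i}A_j$ rather than $\Reg{A_i}$: a regular point of the variety $A_i$ lying on a second component is singular in $A$ and need not belong to $M(A)$ (two lines through the origin in $\C^2$), so the bijection should read: $C\setminus S$ is connected, hence contained in a single $R_i$; each $R_i$ is itself connected (the regular locus of an irreducible set minus a proper analytic subset stays connected), hence contained in a single component, and one checks $C\setminus S=R_i$; finally $C\setminus S$ is dense in $C$ because $S$ has empty interior in the $2d$-manifold $C$, whence $\overline{C}=\overline{R_i}=A_i$. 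In the peeling step, $\{x:\dim_x A=D\}$ already equals $A^{(D)}$ with no closure needed, and for $\bigcup_{d<D}A^{(d)}=\overline{A\setminus A^{(D)}}$ you should justify the phrase ``the generic point of each lower component avoids $A^{(D)}$'': an irreducible component cannot be contained in the (locally finite) union of the others --- otherwise, by irreducibility, it would lie in a single one of them, contradicting maximality --- so $A_i\cap A^{(D)}$ is a proper analytic subset of $A_i$ and $A_i\setminus A^{(D)}$ is dense in $A_i$. With these adjustments your argument is a complete and legitimate proof of the lemma, dimension-theoretic in flavor where other standard treatments reach the same topological characterization through local homology groups.
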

\begin{remark}
{\rm All the sets considered in the paper are supposed to be equipped with the Euclidean metric.}
\end{remark}

\section{Blow-spherical equivalence}\label{section:blow-spherical}

\begin{definition}
Let $(X, 0)$ and $(Y, 0)$ be subanalytic subsets germs, respectively at the origin of $\R^n$ and $\R^p$. 
\begin{itemize}
\item A continuous mapping $\varphi:(X,0)\to (Y,0)$, with $0\not\in \varphi(X\setminus \{0\})$, is a {\bf blow-spherical morphism} (shortened as {\bf blow-morphism}), if the mapping
$$
\rho_p^{-1}\circ \varphi\circ\rho _n:X'\setminus \partial X'\to Y'\setminus \partial Y'
$$
extends as a continuous mapping $\varphi':X'\to Y'$.
\item A {\bf blow-spherical homeomorphism} (shortened as {\bf blow-iso\-mor\-phism}) is a blow-morphism $\varphi:(X,0)\to (Y,0)$ such that the extension $\varphi'$ is a homeomorphism. In this case, we say that the germs $(X,0)$ and $(Y,0)$ are {\bf blow-spherical equivalents} or {\bf blow-spherical homeomorphic} (or {\bf blow-isomorphic}).
\end{itemize}
\end{definition}
The authors in \cite{BirbrairFG:2017} (see also \cite{BirbrairFG:2012}) defined blow-spherical homeomorphism with the additional hypotheses that the blow-spherical homeomorphism needs also to be subanalytic.
\subsection{Blow-spherical invariance of the relative multiplicities}\label{subsection:multiplicities}

\begin{proposition}\label{defi_diferencial}
If $X$ and $Y$ are blow-spherical homeomorphics, then $C(X,0)$ and $C(Y,0)$ are also blow-spherical homeomorphics.
\end{proposition}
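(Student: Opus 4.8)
The plan is to extract from the given blow-isomorphism a homeomorphism between the tangent links $\mathbb{S}_0X=C(X,0)\cap\mathbb{S}^{n-1}$ and $\mathbb{S}_0Y=C(Y,0)\cap\mathbb{S}^{p-1}$, and then to promote it to a blow-isomorphism of the cones by radial extension. The starting observation is that the tangent cone of a cone is the cone itself, so $C(C(X,0),0)=C(X,0)$; hence the strict transform of $C(X,0)$ is $(C(X,0))'=\mathbb{S}_0X\times[0,+\infty)$ with boundary $\partial(C(X,0))'=\mathbb{S}_0X\times\{0\}=\partial X'$, and likewise for $Y$. Thus it suffices to build a blow-isomorphism $\Phi\colon(C(X,0),0)\to(C(Y,0),0)$, and the natural candidate is the cone over a homeomorphism of the links.

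First I would show that the extension $\varphi'\colon X'\to Y'$ of the given blow-isomorphism restricts to a homeomorphism $\partial X'\to\partial Y'$. By the definition of a blow-morphism one already has $\varphi'(X'\setminus\partial X')\subseteq Y'\setminus\partial Y'$. For the boundary, let $(u,0)\in\partial X'$ and choose $(u_i,r_i)\in X'\setminus\partial X'$ with $(u_i,r_i)\to(u,0)$ and $r_i>0$; then $\rho_n(u_i,r_i)=r_iu_i\to 0$, so by continuity of $\varphi$ at the origin with $\varphi(0)=0$ we get $\varphi(r_iu_i)\to 0$, whence the radial coordinate $\|\varphi(r_iu_i)\|$ of $\rho_p^{-1}(\varphi(r_iu_i))=\varphi'(u_i,r_i)$ tends to $0$. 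Passing to the limit, $\varphi'(u,0)\in\mathbb{S}^{p-1}\times\{0\}$, so $\varphi'(\partial X')\subseteq\partial Y'$. Since $\varphi'$ is a bijection carrying $X'\setminus\partial X'$ into $Y'\setminus\partial Y'$ and $\partial X'$ into $\partial Y'$, both inclusions must be equalities; in particular $h:=\varphi'|_{\partial X'}\colon\partial X'\to\partial Y'$ is a homeomorphism, which under the identifications $\partial X'=\mathbb{S}_0X$ and $\partial Y'=\mathbb{S}_0Y$ is a homeomorphism $h\colon\mathbb{S}_0X\to\mathbb{S}_0Y$.

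Next I would define $\Phi\colon C(X,0)\to C(Y,0)$ by $\Phi(0)=0$ and $\Phi(v)=\|v\|\,h(v/\|v\|)$ for $v\neq 0$. This is well defined and lands in $C(Y,0)$ because $h(v/\|v\|)\in\mathbb{S}_0Y$ and $C(Y,0)$ is a union of rays through the origin; it is continuous (at $0$ since $\|\Phi(v)\|=\|v\|$), and $0\notin\Phi(C(X,0)\setminus\{0\})$ for the same reason. Computing the induced map on strict transforms, for $(u,r)$ with $r>0$ one gets $\rho_p^{-1}(\Phi(\rho_n(u,r)))=\rho_p^{-1}(r\,h(u))=(h(u),r)$, which extends continuously by $(u,0)\mapsto(h(u),0)$; hence $\Phi'=h\times\id$ on $\mathbb{S}_0X\times[0,+\infty)$, a homeomorphism with inverse $h^{-1}\times\id$. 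Therefore $\Phi$ is a blow-isomorphism, and $C(X,0)$ and $C(Y,0)$ are blow-spherical homeomorphic.

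I expect the only delicate point to be the first step, namely that $\varphi'$ sends the boundary of the strict transform onto the boundary; everything there hinges on the continuity of $\varphi$ at the origin with $\varphi(0)=0$, which forces boundary points to have vanishing radial coordinate in the image. Once the induced homeomorphism $h$ of tangent links is in hand, the radial extension $\Phi$ and the verification that it is a blow-isomorphism are routine, precisely because the strict transforms of the cones are literally the products $\mathbb{S}_0X\times[0,+\infty)$ and $\mathbb{S}_0Y\times[0,+\infty)$.
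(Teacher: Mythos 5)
Your proof is correct and follows essentially the same route as the paper: extract the boundary homeomorphism $h=\varphi'|_{\partial X'}$ (the paper's $\nu_0\varphi$) and extend it radially to the cones, noting that the strict transform of a cone is the product $\mathbb{S}_0X\times[0,+\infty)$ so the induced map is $h\times\id$. The only difference is that you explicitly justify $\varphi'(\partial X')=\partial Y'$, a step the paper asserts without proof; that verification is sound.
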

\begin{proof}
Let $\varphi:X\to Y$ be a blow-isomorphism. 
Then $\varphi'|_{\partial X'}:\partial X'\to \partial Y'$ is a homeomorphism.
We define $d_0\varphi: C(X,0)\to C(Y,0)$ by
$$
d_0\varphi(x)=\left\{\begin{array}{ll}
\|x\|\cdot \nu _0\varphi(\frac{x}{\|x\|}),& x\not=0\\
0,& x=0,
\end{array}\right.
$$
where $\varphi'(x,0)=(\nu _0\varphi(x),0)$.
We have that $d_0\varphi$ is a blow-spherical homeomorphism, since $\rho^{-1}\circ d_0\varphi\circ \rho(x,t)=(\nu _0\varphi(x),t)$.

\end{proof}
\begin{theorem}\label{multiplicities}
Let $\varphi:(X,0)\to (Y,0)$ be a blow-spherical homeomorphism. If $C(X,0)=\bigcup_{j=1}^r X_j$ and $C(Y,0)=\bigcup_{j=1}^r Y_j$,  with $Y_j=d_0\varphi(X_j)$, then $k_X(X_j)=k_Y(Y_j)$, $j=1,...,r$.
\end{theorem}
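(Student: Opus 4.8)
The plan is to compute the relative multiplicity $k_X(X_j)$ at a single, carefully chosen simple point of $\partial X'$ lying over $X_j$, and then transport that computation through $\varphi'$ to a simple point of $\partial Y'$ lying over $Y_j$. By the Proposition just proved, $d_0\varphi\colon C(X,0)\to C(Y,0)$ is a blow-spherical homeomorphism satisfying $\rho^{-1}\circ d_0\varphi\circ\rho(x,t)=(\nu_0\varphi(x),t)$, and the extension $\varphi'$ restricts to a homeomorphism $\varphi'|_{\partial X'}\colon\partial X'\to\partial Y'$ given in coordinates by $(x,0)\mapsto(\nu_0\varphi(x),0)$. Since $Y_j=d_0\varphi(X_j)$, this restriction carries $(X_j\cap\mathbb{S}^{n-1})\times\{0\}$ onto $(Y_j\cap\mathbb{S}^{p-1})\times\{0\}$; by Lemma \ref{irredutivel} applied to $d_0\varphi$ the sets $Y_j$ are exactly the irreducible components of $C(Y,0)$, so the hypothesis is consistent.

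First I would establish the purely local statement: $\varphi'$ sends simple points of $\partial X'$ to simple points of $\partial Y'$ and preserves the sheet-count, i.e.\ $k_Y(\varphi'(x))=k_X(x)$ for every $x\in Smp(\partial X')$. The key point is that $\varphi'\colon X'\to Y'$ is a homeomorphism carrying $\partial X'$ onto $\partial Y'$, and hence $X'\setminus\partial X'$ onto $Y'\setminus\partial Y'$. Conditions (a) and (b) in the definition of a simple point are topological---being a topological manifold, respectively a topological manifold with boundary, of the appropriate dimension---and the dimension is preserved since $\varphi'$ is a homeomorphism forces $\dim X=\dim Y$; therefore $\varphi'(x)$ is a simple point whenever $x$ is. For the count, given a small open $V\ni\varphi'(x)$ one takes $U=(\varphi')^{-1}(V)$: then $\varphi'$ restricts to a homeomorphism $(X'\cap U)\setminus\partial X'\to(Y'\cap V)\setminus\partial Y'$, which is precisely $\rho^{-1}(X\setminus\{0\})\cap U\to\rho^{-1}(Y\setminus\{0\})\cap V$, so the two sets have the same number of connected components and the locally constant functions $k_X$ and $k_Y$ agree at $x$ and $\varphi'(x)$.

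Next I would pass from simple points to irreducible components. By definition $k_X(X_j)=k_X(C_i)$, where $C_i$ is the unique connected component of $Smp(\partial X')$ met by the generic stratum $((X_j\setminus\sigma_X)\cap\mathbb{S}^{n-1})\times\{0\}$, with $\dim\sigma_X<\dim X$; similarly $k_Y(Y_j)=k_Y(D)$ for a corresponding analytic set $\sigma_Y$ and component $D$ of $Smp(\partial Y')$. The trace of $\sigma_X$ on $(X_j\cap\mathbb{S}^{n-1})\times\{0\}$ is nowhere dense there, and likewise the trace of $\sigma_Y$ on $(Y_j\cap\mathbb{S}^{p-1})\times\{0\}$. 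Since $\varphi'|_{\partial X'}$ is a homeomorphism, the preimage of the trace of $\sigma_Y$ is again nowhere dense; hence the complement of both nowhere-dense sets is nonempty, and I may choose a point $x\in C_i$ with $\varphi'(x)\in D$. Combining this with the local statement yields
\[
k_X(X_j)=k_X(C_i)=k_X(x)=k_Y(\varphi'(x))=k_Y(D)=k_Y(Y_j),
\]
which is the assertion.

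I expect the genuine difficulty to be concentrated in this last, genericity step: one must produce a single point that is simultaneously generic for $X_j$ and whose image is generic for $Y_j$. This is where the only essential topological input enters, namely that a homeomorphism preserves nowhere-dense sets, so that the \emph{bad} locus pulled back from the $Y$-side cannot swallow the generic stratum on the $X$-side. By contrast, the invariance of \emph{simple point} and of the number $k$ is a routine consequence of $\varphi'$ being a homeomorphism of pairs $(X',\partial X')\to(Y',\partial Y')$, and the identification of $Y_j$ with $d_0\varphi(X_j)$ as an irreducible component is exactly Lemma \ref{irredutivel}.
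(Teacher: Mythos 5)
Your proof is correct and follows essentially the same route as the paper: both arguments transport the local sheet count through the homeomorphism of pairs $\varphi'\colon (X',\partial X')\to (Y',\partial Y')$ at a generic point of $\partial X'$ lying over $X_j$, comparing numbers of connected components of small punctured neighborhoods. The only difference is that you spell out what the paper compresses into the phrase ``let $p$ be a generic point''---namely the nowhere-dense argument producing a point that is simultaneously generic for $X_j$ and whose image is generic for $Y_j$, together with the routine check that simple points and their counts are preserved---which is a careful explicitation of the paper's proof rather than a different method.
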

\begin{proof}
Fixed $j\in \{1,...,r\}$, let $p\in \Sp_0 X_{j}\times \{0\}\subset \partial X'$ be a generic point and let $U \subset X'$ be a small neighborhood of $p$. Since $\varphi':X'\to Y'$ is a homeomorphism, we have that $V=\varphi'(U)$ is a small neighborhood of $\varphi'(p)\in \Sp_0 Y_{j}\times \{0\}\subset \partial Y'$. Moreover, $\varphi'(U\setminus \partial X')=V\setminus \partial Y'$, since $\varphi'|_{\partial X'}:\partial X'\to \partial Y'$ is a homeomorphism, as well. Using once more that $\varphi'$ is a homeomorphism, we obtain the number of connected components of $U\setminus \partial X'$ is equal to $V\setminus \partial Y'$, showing that $k_X(X_j)=k_Y(Y_j)$.
\end{proof}
\begin{remark}[see \cite{Chirka:1989}, p. 133, proposition]\label{multip}
{\rm Let $X\subset \C^n$ be a complex analytic set and $X_1,...,X_r$ the irreducible components of $C(X,0)$. Then, $m(X,0)=\sum_{j=1}^rk_X(X_j)m(X_j,0)$.}
\end{remark}

\subsection{Examples of blow-spherical equivalences}\label{section:examples}

\begin{proposition}\label{diferblow}
Let $X, Y\subset \R^m$ be two subanalytic sets. If $\varphi: X\to Y$ is a homeomorphism such that $\varphi$ and $\varphi^{-1}$are differentiable at the origin, then $\varphi$ is a blow-spherical homeomorphism.
\end{proposition}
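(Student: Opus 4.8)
The plan is to build the extension $\varphi'$ explicitly from the derivative $L := d_0\varphi$ (and $M := d_0\varphi^{-1}$ for the inverse) and then to verify it is a homeomorphism by reducing everything to the behaviour of $\varphi$ along approach curves, where differentiability can be applied directly. Throughout I assume $\varphi(0)=0$, so that differentiability at the origin means $\varphi(z)=L(z)+o(\|z\|)$ as $z\to 0$ in $X$, and likewise $\varphi^{-1}(w)=M(w)+o(\|w\|)$ as $w\to 0$ in $Y$. Since $\varphi$ is a homeomorphism fixing $0$, we have $0\notin\varphi(X\setminus\{0\})$, so $\varphi$ has a chance to be a blow-morphism; on the interior $X'\setminus\partial X'=\rho^{-1}(X\setminus\{0\})$ the candidate extension is forced to be $\varphi'(x,r)=\rho^{-1}(\varphi(rx))=\bigl(\varphi(rx)/\|\varphi(rx)\|,\,\|\varphi(rx)\|\bigr)$.

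The key preliminary step, and the one I expect to be the main obstacle, is to control $L$ on the tangent cone even though $X$ and $Y$ may be singular and $L$ need not be a global isomorphism. Using the curve-selection characterization of $C(X,0)$ recorded in the Remark, each unit $v\in\Sp_0 X$ is realized by a curve $\alpha(t)=tv+o(t)$ in $X$, and differentiating $\varphi$ along it gives $\varphi(\alpha(t))=tL(v)+o(t)$. I first rule out $L(v)=0$: if $L(v)=0$ then $\beta:=\varphi\circ\alpha$ satisfies $\beta(t)=o(t)$, and applying the bounded operator $M$ yields $\alpha(t)=\varphi^{-1}(\beta(t))=M(\beta(t))+o(\|\beta(t)\|)=o(t)$, contradicting $\|\alpha(t)\|=t+o(t)$. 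Hence $L(v)\neq 0$, so $\beta(t)/t\to L(v)$ exhibits $L(v)\in C(Y,0)$; feeding this back through $M$ gives $\alpha(t)=tM(L(v))+o(t)$, whence $M(L(v))=v$. By symmetry $L(M(w))=w$ for $w\in C(Y,0)$, so $v\mapsto L(v)/\|L(v)\|$ and $w\mapsto M(w)/\|M(w)\|$ are mutually inverse homeomorphisms $\Sp_0X\to\Sp_0Y$ and $\Sp_0Y\to\Sp_0X$.

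With this in hand I define $\varphi'$ on $\partial X'$ by $\varphi'(v,0)=\bigl(L(v)/\|L(v)\|,\,0\bigr)$ and check continuity at a boundary point. For an interior sequence $(x_i,r_i)\to(v,0)$, writing $z_i=r_ix_i$ (so $\|z_i\|=r_i\to 0$ and $z_i/\|z_i\|=x_i\to v$), differentiability gives $\varphi(z_i)=r_i\bigl(L(x_i)+o(1)\bigr)$ with $L(x_i)\to L(v)\neq 0$; therefore $\|\varphi(z_i)\|\to 0$ and $\varphi(z_i)/\|\varphi(z_i)\|\to L(v)/\|L(v)\|$, i.e. $\varphi'(x_i,r_i)\to\varphi'(v,0)$. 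The same estimate applied to purely boundary sequences is just continuity of $v\mapsto L(v)/\|L(v)\|$, so $\varphi'$ is continuous on all of $X'$ and $\varphi$ is a blow-morphism. Running the identical construction for $\varphi^{-1}$ produces a continuous extension $(\varphi^{-1})'$; on the interior it is inverse to $\varphi'$ because $\rho$ is a homeomorphism there and $\varphi,\varphi^{-1}$ are mutually inverse, while on the boundary the relations $M(L(v))=v$ and $L(M(w))=w$ give $(\varphi^{-1})'\circ\varphi'=\id$ on $\partial X'$ and $\varphi'\circ(\varphi^{-1})'=\id$ on $\partial Y'$. Hence $\varphi'$ is a homeomorphism with inverse $(\varphi^{-1})'$, and $\varphi$ is a blow-spherical homeomorphism.
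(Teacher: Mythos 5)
Your proof is correct and takes essentially the same route as the paper's: both extend $\varphi$ over the boundary by the normalized derivative $\nu_0\varphi(v)=D\varphi_0(v)/\|D\varphi_0(v)\|$ and deduce continuity of $\varphi'$ from the first-order expansion $\varphi(tv)=tD\varphi_0(v)+o(t)$, handling the inverse symmetrically. If anything, your write-up is more complete, since you explicitly justify two points the paper leaves implicit --- that the derivative is nonvanishing on $\Sp_0 X$ and carries $C(X,0)$ into $C(Y,0)$ with $M\circ L=\id$ on the cone --- and you verify continuity along arbitrary sequences $(x_i,r_i)\to(v,0)$ rather than only along radial limits.
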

\begin{proof}
Observe that $\nu_0\varphi:C(X,0)\cap \Sp^{m-1}\to C(Y,0)\cap\Sp^{m-1}$ given by $$\nu_0\varphi(\x)=\frac{D\varphi_0(\x)}{\|D\varphi_0(\x)\|}$$ 
is a homeomorphism with inverse 
$$(\nu_0\varphi)^{-1}(\x)=\frac{D\varphi^{-1}_0(\x)}{\|D\varphi^{-1}_0(\x)\|}.$$

Using that $\varphi(t\x)=tD\varphi_0(\x)+o(t)$, we obtain
$$
\lim\limits _{t\to 0^+}\frac{\varphi(t\x)}{\|\varphi(t\x)\|}=\frac{D\varphi_0(\x)}{\|D\varphi_0(\x)\|}=\nu_0\varphi(\x)
$$
Then the mapping $\varphi': X'\to Y'$ given by
$$
\varphi'(\x,t)=\left\{\begin{array}{ll}
\left(\frac{\varphi(t\x)}{\|\varphi(t\x)\|},\|\varphi(t\x)\|\right),& t\not=0\\
(\nu_0\varphi(\x),0),& t=0,
\end{array}\right.
$$
is a homeomorphism. Therefore, $\varphi$ is a blow-spherical homeomorphism.
\end{proof}

We do a slight digression to remind the notion of inner distance on a connected Euclidean subset.

Let $Z\subset\R^m$ be a path connected subset. Given two points $q,\tilde{q}\in Z$, we define the \emph{inner distance} in $Z$ between $q$ and $\tilde{q}$ by the number $d_Z(q,\tilde{q})$ below:
$$d_Z(q,\tilde{q}):=\inf\{ \mbox{length}(\gamma) \ | \ \gamma \ \mbox{is an arc on} \ Z \ \mbox{connecting} \ q \ \mbox{to} \ \tilde{q}\}.$$
Thus, we say that two subsets $X\subset \R^m$ and $Y\subset k$ are {\bf intrinsic bi-Lipschitz equivalents} (resp. {\bf bi-Lipschitz equivalents}) if there are a homeomorphism $\phi:X\to Y$ and $C>0$ such that $\frac{1}{C}d_X(x,y)\leq d_Y(\phi(x),\phi(y))\leq C d_X(x,y)$ (resp. $\frac{1}{C}\|x-y\|\leq \|\phi(x)-\phi(y)\|\leq C \|x-y\|$) for all $x,y\in X$.

The next proposition show another example of blow-spherical homeomorphism (see Proposition 4.1 in \cite{BirbrairFG:2012}).
\begin{proposition}\label{lip-blow}
Let $X, Y\subset \R^m$ be two subanalytic sets. If $\varphi:(X,0)\to (Y,0)$ is a subanalytic bi-Lipschitz homeomorphism (with respect to the metric induced). Then, $\varphi$ is a blow-spherical homeomorphism.
\end{proposition}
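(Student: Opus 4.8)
The plan is to verify the two defining conditions of a blow-spherical homeomorphism directly, exploiting that the induced metric gives the bi-Lipschitz inequality $\frac{1}{C}\|p-q\|\le\|\varphi(p)-\varphi(q)\|\le C\|p-q\|$ with $\varphi(0)=0$. Writing $\rho=\rho_m$ and $\psi:=\rho^{-1}\circ\varphi\circ\rho$ on $X'\setminus\partial X'$, we have $\psi(x,r)=\big(\varphi(rx)/\|\varphi(rx)\|,\ \|\varphi(rx)\|\big)$, and I must show that $\psi$ extends continuously to $\varphi':X'\to Y'$ and that this extension is a homeomorphism. Note first that $0\notin\varphi(X\setminus\{0\})$ holds automatically, since $\varphi$ is injective with $\varphi(0)=0$, so $\psi$ is well defined.

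First I would dispose of the radial coordinate. For $(x,r)\in X'\setminus\partial X'$ one has $rx\in X$ and $\|x\|=1$, so the Lipschitz bounds give $\tfrac{r}{C}\le\|\varphi(rx)\|\le Cr$; in particular $\|\varphi(rx)\|\asymp r$ and $\|\varphi(rx)\|\to0$ as $r\to0$. Thus the radial component of $\psi$ extends continuously by $0$ on $\partial X'$, and the comparison $\|\varphi(rx)\|\asymp r$ is the quantitative fact I will reuse below.

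The heart of the matter is the angular coordinate. For $v\in\Sp_0 X=C(X,0)\cap\Sp^{m-1}$, I would invoke the curve selection lemma (the Remark expressing $C(A,x_0)$ via arcs) to fix a subanalytic arc $\alpha:[0,\varepsilon)\to X$ with $\alpha(0)=0$ and $\alpha(t)=tv+o(t)$. Since $\varphi$ is subanalytic, $\varphi\circ\alpha$ is a subanalytic arc in $Y$ through $0$, hence admits a tangent direction $w:=\lim_{t\to0^+}\varphi(\alpha(t))/\|\varphi(\alpha(t))\|\in\Sp_0 Y$ by its Puiseux expansion. I then claim $\nu_0\varphi(v):=w$ furnishes the continuous extension. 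Indeed, for any sequence $(x_i,r_i)\to(v,0)$ in $X'\setminus\partial X'$, the triangle inequality yields $\|r_ix_i-\alpha(r_i)\|\le r_i\|x_i-v\|+o(r_i)=o(r_i)$, whence $\|\varphi(r_ix_i)-\varphi(\alpha(r_i))\|\le C\,o(r_i)=o(r_i)$ by the Lipschitz bound. Combining this with $\|\varphi(r_ix_i)\|\asymp r_i\asymp\|\varphi(\alpha(r_i))\|$, an elementary unit-vector estimate (if $\|a-b\|=o(r)$ and $\|a\|,\|b\|\asymp r$ then $a/\|a\|-b/\|b\|\to0$) forces $\varphi(r_ix_i)/\|\varphi(r_ix_i)\|\to w$. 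This single computation simultaneously establishes existence of the limit, its independence of the sequence and of the chosen arc $\alpha$, and continuity of the extension; hence $\varphi$ is a blow-morphism with $\varphi'(v,0)=(\nu_0\varphi(v),0)$. To finish, I would run the same argument for $\varphi^{-1}$, which is again subanalytic, bi-Lipschitz and fixes $0$, obtaining a continuous extension $(\varphi^{-1})':Y'\to X'$; on the dense open sets $X'\setminus\partial X'$ and $Y'\setminus\partial Y'$ the maps $\varphi'$ and $(\varphi^{-1})'$ are mutually inverse, being the $\rho^{-1}(\cdot)\rho$-conjugates of $\varphi$ and $\varphi^{-1}$, and by continuity and density this persists on all of $X'$ and $Y'$. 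Therefore $\varphi'$ is a homeomorphism and $\varphi$ is a blow-spherical homeomorphism.

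I expect the angular step to be the main obstacle, and it is here that both hypotheses are genuinely used. Subanalyticity is indispensable to guarantee that the image arc $\varphi\circ\alpha$ possesses a tangent direction rather than oscillating, and the \emph{induced Euclidean} metric (as opposed to the inner metric) is essential, since the Lipschitz bound must convert the Euclidean $o(t)$-proximity of $r_ix_i$ and $\alpha(r_i)$ into $o(t)$-proximity of their images. This is not a cosmetic point: for the inner metric the analogous assertion fails already for two smooth branches tangent to a common direction but meeting only at $0$, where a bi-Lipschitz map can send them to branches with different tangent directions, destroying the continuous extension.
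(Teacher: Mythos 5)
Your proof is correct, but it takes a genuinely different route from the paper's. The paper's argument runs through the Lipschitz derivative $D_0\varphi\colon C(X,0)\to C(Y,0)$ of Bernig--Lytchak, and, in order to compute the angular limit along an \emph{arbitrary} sequence $x_n\to 0$ rather than only along subanalytic arcs, it first extends $\varphi$ to a globally defined subanalytic Lipschitz map $\Phi\colon\R^m\to\R^m$ via the inf-convolution $\Phi_i(y)=\inf\{\varphi_i(x)+C\|x-y\|;\ x\in X\}$ and then identifies $\lim_{n}\Phi(x_n)/\|\Phi(x_n)\|$ with $D_0\Phi(v)/\|D_0\Phi(v)\|$; the homeomorphism property of $\nu_0\varphi$ comes for free from the functoriality of the Lipschitz derivative. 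You bypass both the derivative and the extension: you anchor each direction $v$ in the tangent link by a subanalytic arc $\alpha(t)=tv+o(t)$ (curve selection), extract the image direction $w$ from the Puiseux expansion of the subanalytic arc $\varphi\circ\alpha$ (where $\|\varphi(\alpha(t))\|\asymp t$ forces the leading exponent to be $1$, so $w$ exists and lies in the tangent link of $Y$), and transfer $w$ to arbitrary sequences by the elementary displacement estimate $\|\varphi(r_ix_i)-\varphi(\alpha(r_i))\|\le C\,o(r_i)$ combined with $\|\varphi(r_ix_i)\|\asymp r_i$. This is more elementary and self-contained than the paper's proof, at the cost of having to reassemble the homeomorphism property from the two extensions by density (which you do correctly, as $X'$ and $Y'$ are Hausdorff). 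One small point worth spelling out: continuity of $\varphi'$ along sequences lying \emph{inside} $\partial X'$ (i.e., continuity of $v\mapsto\nu_0\varphi(v)$ on the tangent link) is not literally contained in your ``single computation,'' which treats sequences from $X'\setminus\partial X'$; it follows by a routine diagonal argument, approximating each $(v_i,0)$ by a nearby point of $X'\setminus\partial X'$ whose image direction is $1/i$-close to $\nu_0\varphi(v_i)$. Your closing remark on the inner metric is apt and consistent with the paper's example in Subsection 3.3 ($y^2=x^3$ versus $y=0$, intrinsically bi-Lipschitz equivalent but not blow-spherical equivalent): each hypothesis is used exactly once, subanalyticity to guarantee tangent directions of image arcs (in the paper, to guarantee existence of $D_0\varphi$), and the induced Euclidean metric for the displacement estimate.
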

\begin{proof}
Observe that $\nu_0\varphi:C(X,0)\cap \Sp^{m-1}\to C(Y,0)\cap\Sp^{m-1}$ given by $$\nu_0\varphi(\x)=\frac{D_0\varphi(\x)}{\|D_0\varphi(\x)\|}$$ 
is a homeomorphism with inverse 
$$(\nu_0\varphi)^{-1}(\x)=\frac{D_0\varphi^{-1}(\x)}{\|D_0\varphi^{-1}(\x)\|},$$ 
where $D_0\varphi:C(X,0)\to C(Y,0)$ (resp. $D_0\varphi^{-1}:C(Y,0)\to C(X,0)$) is the Lipschitz derivative of $\varphi$ (resp. $\varphi^{-1}$) at the origin, as defined by A. Bernig and A. Lytichak in \cite{BernigL:2007}. We know that $D_0\varphi$ is given by
$$
D_0\varphi(v)=\lim\limits_{t\to 0^+} \frac{\varphi(\alpha(t))}{t},
$$
where $\alpha:[0,\varepsilon )\to X$ is a subanalytic curve such that $\alpha(t)=tv+o(t)$. Thus, writing $\varphi=(\varphi_1,...,\varphi_m)$, we define, for each $i=1,...,m$, $\Phi_i:\R^m\to \R$ by
$$
\Phi_i(y)=\inf \{\varphi_i(x)+C \|x-y\|;\, x\in X\},
$$
where $C$ is the Lipschitz constant of $\varphi$. Then $\Phi=(\Phi_1,...,\Phi_m):\R^m\to \R^m$ is a subanalytic Lipschitz map and it is a extension of $\varphi$. Moreover, if $v\in C(X,0)$, we have $D_0\varphi(v)=\lim\limits_{t\to 0^+} \frac{\Phi(t v)}{t}=D_0\Phi(v)$, since $\Phi(\alpha(t))-\Phi(tv)=o(t)$ if $\alpha(t)=tv+o(t)$. In particular, $\nu_0\varphi(v)=\frac{D_0\Phi(v)}{\|D_0\Phi(v)\|}$. Moreover, for $x_n\in X\setminus \{0\}$ such that $x_n\to 0$ and $\frac{x_n}{\|x_n\|}\to \x \in C(X,0)\cap \mathbb{S}^{m-1}$, we have
$$
\nu_0\varphi(\x)=\lim\limits _{n\to \infty }\frac{\varphi(x_n)}{\|\varphi(x_n)\|}=\lim\limits _{n\to \infty }\frac{\Phi(x_n)}{\|\Phi(x_n)\|},
$$
since $\lim\limits _{n\to \infty }\frac{\Phi(x_n)}{\|\Phi(x_n)\|}=\frac{D_0\Phi(\x)}{\|D_0\Phi(\x)\|}$.
Then the mapping $\varphi': X'\to Y'$ given by
$$
\varphi'(\x,t)=\left\{\begin{array}{ll}
\left(\frac{\varphi(t\x)}{\|\varphi(t\x)\|},\|\varphi(t\x)\|\right),& t\not=0\\
(\nu_0\varphi(\x),0),& t=0,
\end{array}\right.
$$
is a continuous map. Using $\varphi^{-1}$ instead of $\varphi$ above, we obtain that $\varphi$ is a blow-spherical homeomorphism.
\end{proof}

\subsection{Blow-spherical equivalence and other equivalences}\label{section:counter-examples}
Now we give some examples that separate blow-spherical equivalence of others equivalences.
\begin{example}
$X=\{(z,x_1,x_2,x_3)\in\C^4;\, z^3=x_1^2+x_2^2+x_3^2\}$ and $Y=\{(z,x_1,x_2,x_3)\in\C^4;\, z=0\}$ are topological equivalents (see \cite{Brieskorn:1966}). However, by Theorem \ref{reg-blow}, they are not blow-spherical equivalents.
\end{example}

\begin{example}
$X=\{(x,y)\in\C^2;\, y^2=x^3\}$ and $Y=\{(x,y)\in\C^2;\, y^2=x^5\}$ are blow-spherical equivalents, but they are not bi-Lipschitz equivalents (see \cite{Fernandes:2003}, Example 2.1).
\end{example}

\begin{example}
$X=\{(x,y)\in\C^2;\, y^2=x^3\}$ and $Y=\{(x,y)\in\C^2;\, y=0\}$ are intrinsic bi-Lipschitz equivalents, however by Theorem \ref{reg-blow}, they are not blow-spherical equivalents.
\end{example}

\section{Regularity of complex analytic sets}\label{section:regularity}

\begin{definition} A subset $X\subset\R^n$ is called {\bf blow-spherical regular} at $0\in X$ if there is an open neighborhood $U$ of $0$ in $X$ which is blow-spherical homeomorphic to an Euclidean ball. 
\end{definition}
We remember the Prill's Theorem proved in \cite{Prill:1967} .
\begin{lemma}[\cite{Prill:1967}, Theorem]\label{prill}
Let $C\in \C^n$ be a complex cone which is a topological manifold. Then $C$ is a plane in $\C^n$.
\end{lemma}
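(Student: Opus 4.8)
The plan is to pass from the local topological manifold condition to a purely homological statement about the link of $C$ at the origin, and then to let the complex structure act. Write $d=\dim_{\C}C$, so that $C$ is a real $2d$-dimensional topological manifold, and put $L:=C\cap \Sp^{2n-1}$ for the link of $C$ at $0$. Since $C$ is a complex cone it is invariant under multiplication by $\C^{*}$, in particular under positive reals, so the map $(x,t)\mapsto tx$ identifies $C$ with the cone on $L$ and identifies a neighbourhood of $0$ in $C$ with the open cone $c^{\circ}L$. Because $0$ is a manifold point, $H_{k}(C,C\setminus\{0\};\Z)$ is $\Z$ for $k=2d$ and vanishes otherwise; computing the same group through the open cone gives $H_{k}(c^{\circ}L,c^{\circ}L\setminus\{0\};\Z)\cong\tilde H_{k-1}(L;\Z)$. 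Comparing, I conclude that $L$ is an integral homology $(2d-1)$-sphere.

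Next I would exploit the complex scaling. The subgroup $U(1)\subset\C^{*}$ preserves both $C$ and $\Sp^{2n-1}$, hence acts on $L$, and the action is free, since $e^{i\theta}z=z$ with $z\neq0$ forces $e^{i\theta}=1$. Its orbit space is the projectivization $B:=\mathbb{P}(C)\subset\CP^{n-1}$, a connected, pure $(d-1)$-dimensional projective variety, and $L\to B$ is the restriction to $B$ of the tautological Hopf circle bundle of $\CP^{n-1}$. In particular its Euler class is $e=\pm\,i^{*}h$, where $h\in H^{2}(\CP^{n-1};\Z)$ is the hyperplane class and $i:B\hookrightarrow\CP^{n-1}$ the inclusion. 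Feeding the fact that $L$ is a homology $(2d-1)$-sphere into the Gysin sequence of this circle bundle, I would deduce that cup product with $e$ gives isomorphisms $H^{j}(B;\Z)\xrightarrow{\sim}H^{j+2}(B;\Z)$ in the relevant range; starting from $H^{0}(B)=\Z$ this shows $B$ has the integral cohomology ring of $\CP^{d-1}$, with $e^{d-1}$ generating $H^{2(d-1)}(B;\Z)\cong\Z$.

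Finally I would convert this into a degree computation. The degree $\delta$ of $B$ in $\CP^{n-1}$ satisfies $\delta=\langle h^{d-1},i_{*}[B]\rangle=\langle (i^{*}h)^{d-1},[B]\rangle=\langle(\pm e)^{d-1},[B]\rangle$, and since $e^{d-1}$ generates the top cohomology of the oriented pseudomanifold $B$, this pairing equals $\pm1$. Hence $\deg B=1$. A connected, pure-dimensional projective variety of degree $1$ is a single linear subspace (two components of dimension $d-1$ would already contribute degree at least $2$), so $B$ is a linear $\CP^{d-1}\subset\CP^{n-1}$, and therefore $C$, being the cone over $B$, is a complex plane.

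The routine parts are the Gysin bookkeeping and the classical fact that a degree-one variety is linear. The delicate points are, first, justifying that a manifold condition imposed only at the single point $0$ really forces the whole link $L$ to be a homology sphere; this is exactly where one must use that $C$ is \emph{globally} a cone, so that a neighbourhood of $0$ is honestly the open cone on $L$. Second, one must check that $L\to B$ is a genuine principal $S^{1}$-bundle whose Euler class is the restricted hyperplane class, not merely a free action with quotient $B$. I expect the first point, the passage from the purely local manifold hypothesis to the global homology type of the link, to be the real crux of the argument.
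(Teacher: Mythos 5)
Your proof is correct and is essentially the argument of Prill's original paper: the lemma is quoted in the text from \cite{Prill:1967} without a reproof, and Prill's proof runs through exactly your three steps --- the global cone structure converts the manifold hypothesis at the vertex into the link $L$ being an integral homology $(2d-1)$-sphere, the Gysin sequence of the restricted Hopf circle bundle $L\to\mathbb{P}(C)$ then shows $\mathbb{P}(C)$ has the integral cohomology of $\CP^{d-1}$ generated by the restricted hyperplane class, and evaluating $h^{d-1}$ against the fundamental class gives degree one, hence linearity. The fine points you flag are handled as you expect (the neighborhood of $0$ is honestly the open cone on $L$ by $\R_{>0}$-invariance, and $L\to\mathbb{P}(C)$ is a genuine principal $S^1$-bundle because it is the restriction of the Hopf bundle to the saturated subset $L$), and the one step you gloss --- that $\langle e^{d-1},[B]\rangle=\pm1$ --- follows from the universal coefficient theorem since $H^{2(d-1)}(B;\Z)\cong\Z$ forces the Ext term to vanish, $B$ to be irreducible, and the evaluation map to carry generator to generator.
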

\begin{theorem}\label{reg-blow}
Let $X\subset \C^n$ be a complex analytic set. If $X$ is blow-spherical regular at $0\in X$, then $(X,0)$ is smooth.
\end{theorem}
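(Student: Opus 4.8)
The plan is to use the blow-spherical homeomorphism to force $m(X,0)=1$ and then to appeal to the classical fact that a complex analytic germ of multiplicity one is smooth. Fix a blow-spherical homeomorphism $\varphi\colon (U,0)\to (B,0)$, where $U$ is an open neighborhood of $0$ in $X$ and $B$ is an open Euclidean ball centered at the origin. Unwinding the definition, the extension $\varphi'$ restricts to a homeomorphism $X'\setminus\partial X'\to B'\setminus\partial B'$, and composing with the homeomorphisms $\rho$ off the exceptional sets shows that $\varphi$ is an honest homeomorphism $U\setminus\{0\}\to B\setminus\{0\}$, continuous at $0$. In particular $X$ is a topological manifold at $0$, so $B$ has real dimension $2k$ with $k=\dim_{\C}X$.

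First I would pass to tangent cones. By Proposition \ref{defi_diferencial}, $C(X,0)$ and $C(B,0)$ are blow-spherical homeomorphic, hence (by the same unwinding as above) homeomorphic as topological spaces; since $C(B,0)=\R^{2k}$, the cone $C(X,0)$ is a topological manifold. As $X$ is complex analytic, $C(X,0)$ is a complex cone, so Prill's Theorem (Lemma \ref{prill}) applies and gives that $C(X,0)$ is a complex plane $P$ through the origin. In particular $C(X,0)$ is irreducible, so in the notation of Remark \ref{multip} there is exactly one component, $X_1=P$.

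Next I would compute the relative multiplicity. Applying Theorem \ref{multiplicities} to $\varphi$, with the single-piece decompositions $C(X,0)=P$ and $C(B,0)=\R^{2k}=d_0\varphi(P)$, yields $k_X(P)=k_B(\R^{2k})$. Because $B$ is smooth, its strict transform is, near any (generic) boundary point, a single manifold-with-boundary sheet, whence $k_B(\R^{2k})=1$ and therefore $k_X(P)=1$. Remark \ref{multip} now gives
$$m(X,0)=k_X(P)\,m(P,0)=1\cdot 1=1,$$
using that a complex plane has multiplicity one. Finally, a complex analytic germ of multiplicity one is smooth (see \cite{Chirka:1989}), so $(X,0)$ is smooth.

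The step I expect to be the main obstacle is the honest justification that $k_B(\R^{2k})=1$, i.e.\ that the relative multiplicity of a smooth germ is one: this means verifying that the generic points of $\partial B'$ are simple points and that the strict transform of the ball has a single local sheet over them. A secondary but essential point is that $C(X,0)$ is irreducible, so that the single-component matching hypothesis of Theorem \ref{multiplicities} is legitimately available; this is exactly what Prill's Theorem secures, and it is also what rules out examples such as the cusp $\{y^2=x^3\}$, whose tangent cone is a line but whose relative multiplicity equals two.
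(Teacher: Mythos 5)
Your proposal is correct and follows essentially the same route as the paper's own proof: pass to tangent cones via Proposition \ref{defi_diferencial}, invoke Prill's Theorem (Lemma \ref{prill}) to get that $C(X,0)$ is a plane, use Theorem \ref{multiplicities} to get $k_X(C(X,0))=1$, and conclude $m(X,0)=1$ via Remark \ref{multip}. Your only additions are harmless elaborations the paper leaves implicit, namely the verification that $k_B(\R^{2k})=1$ for the smooth ball and the explicit unwinding of the blow-spherical homeomorphism into an honest homeomorphism off the origin.
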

\begin{proof}
By Theorem \ref{defi_diferencial}, $C(X,0)$ is blow-spherical homeomorphic to $\C^k$, where $k=\dim X$, then $C(X,0)$ is irreducible, since $\C^k$ is irreducible. In particular, $C(X,0)$ is a topological manifold. By Prill's Theorem, $C(X,0)$ is a plane. Hence, $m(C(X,0),0)=1$ and by Theorem \ref{multiplicities}, $k_X(C(X,0))=1$ and using the Remark \ref{multip}, $m(X,0)=k_X(C(X,0))\cdot m(C(X,0),0)=1$. Then, $(X,0)$ is smooth.
\end{proof}


\begin{definition} A subset $X\subset\R^n$ is called {\bf Lipschitz regular} (respectively {\bf subanalytically Lipschitz regular}) at $x_0\in X$ if there is an open neighborhood $U$ of $x_0$ in $X$ which is bi-Lipschitz homeomorphic (respectively subanalytic bi-Lipschitz homeomorphic) to an Euclidean ball. 
\end{definition}
In the paper \cite{BirbrairFLS:2016}, the authors defined the notion of Lipschitz regular complex analytic sets as the sets being subanalytically Lipschitz regular as in the definition above.

Now, we give another proof of the main theorem of \cite{BirbrairFLS:2016}.
\begin{corollary}[\cite{BirbrairFLS:2016}]
If $X\subset \C^n$ is a complex analytic set and subanalytically Lipschitz regular at $0$, then $(X,0)$ is smooth.
\end{corollary}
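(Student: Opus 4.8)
The plan is to reduce the statement directly to Theorem \ref{reg-blow}, using Proposition \ref{lip-blow} to convert the Lipschitz data into blow-spherical data.

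First, I would unwind the hypothesis. By the definition of subanalytic Lipschitz regularity, there is an open neighborhood $U$ of $0$ in $X$ together with a subanalytic bi-Lipschitz homeomorphism $\varphi\colon (U,0)\to (B,0)$, where $B$ is a Euclidean ball and $0$ is sent to the center of $B$. (Here $B$ has real dimension $2k$ with $k=\dim X$, but the precise dimension is immaterial for the argument.) The only mild care needed at this stage is to notice that the two notions of regularity are modeled on the \emph{same} object, a Euclidean ball, so that passing from one to the other will be a single invocation of Proposition \ref{lip-blow}.

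The key step is then to upgrade this map to a blow-spherical homeomorphism. Since $\varphi$ is a subanalytic bi-Lipschitz homeomorphism of germs, Proposition \ref{lip-blow} applies and yields that $\varphi$ is a blow-spherical homeomorphism. Therefore the germ $(X,0)$ is blow-spherical homeomorphic to the Euclidean ball $B$; equivalently, $X$ is blow-spherical regular at $0$ in the sense of the definition preceding Theorem \ref{reg-blow}.

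Finally, Theorem \ref{reg-blow} applies verbatim to conclude that $(X,0)$ is smooth. I do not anticipate any genuine obstacle here: the entire content of the corollary has already been absorbed into Proposition \ref{lip-blow} (subanalytic Lipschitz $\Rightarrow$ blow-spherical) and Theorem \ref{reg-blow} (blow-spherical regular $\Rightarrow$ smooth), so the proof is a formal concatenation of these two results. In effect, subanalytic Lipschitz regularity is a strictly stronger hypothesis than blow-spherical regularity, and the smoothness conclusion is inherited from the more general theorem.
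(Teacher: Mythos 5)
Your proof is correct and follows exactly the paper's own argument: Proposition \ref{lip-blow} upgrades the subanalytic bi-Lipschitz homeomorphism to a blow-spherical one, so $X$ is blow-spherical regular at $0$, and Theorem \ref{reg-blow} then gives smoothness. The paper's proof is the same two-step concatenation, stated in one line.
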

\begin{proof}
By Proposition \ref{lip-blow}, $X$ is blow-spherical regular, then by Theorem \ref{reg-blow}, $(X,0)$ is smooth.
\end{proof}

\section{Invariance of the multiplicity}\label{section:multiplicity}
In this Section, we give some partial answers to versions of the Zariski's multiplicity conjecture.
\subsection{Reduction of the Zariski's Conjecture to homogeneous algebraic sets}\label{section:reduction}

\begin{theorem}[Reduction for homogeneous sets]\label{reduction}
The Question A1 has a positive answer if, and only if, the Question A2 has a positive answer.
\end{theorem}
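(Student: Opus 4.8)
The plan is to prove the two implications separately; the forward direction is immediate and the reverse direction carries all the content.

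First, the implication ``A1 $\Rightarrow$ A2'' is trivial: every homogeneous complex algebraic set is in particular a complex analytic set, so Question A2 is merely the restriction of Question A1 to this subclass. Hence a positive answer to A1 immediately yields a positive answer to A2. For the converse ``A2 $\Rightarrow$ A1'', assume A2 holds and let $X,Y\subset\C^n$ be complex analytic sets together with a blow-spherical homeomorphism $\varphi\colon(X,0)\to(Y,0)$. The strategy is to transport the problem to the tangent cones, which are homogeneous complex algebraic sets, and then recover $m(X,0)$ and $m(Y,0)$ from data on the cones via the relative-multiplicity machinery of Subsection \ref{subsection:multiplicities}. Concretely, by Proposition \ref{defi_diferencial} the associated map $d_0\varphi\colon C(X,0)\to C(Y,0)$ is a blow-spherical homeomorphism. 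Writing $C(X,0)=\bigcup_{j=1}^r X_j$ and $C(Y,0)=\bigcup_{j=1}^r Y_j$ for the decompositions into irreducible components, Lemma \ref{irredutivel} guarantees that the homeomorphism $d_0\varphi$ carries irreducible components to irreducible components, so after reindexing we may assume $Y_j=d_0\varphi(X_j)$ for every $j$.

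Next I would apply A2 componentwise. Each irreducible component $X_j$ (respectively $Y_j$) of a tangent cone is a homogeneous complex algebraic set, so the key technical point is to verify that $d_0\varphi$ restricts to a blow-spherical homeomorphism $X_j\to Y_j$. This should follow from the explicit description obtained in the proof of Proposition \ref{defi_diferencial}, where the extension of $d_0\varphi$ to strict transforms has the form $(x,t)\mapsto(\nu_0\varphi(x),t)$: restricting the boundary homeomorphism $\nu_0\varphi$ to $\Sp_0 X_j$ gives a homeomorphism onto $\Sp_0 Y_j$, hence a homeomorphism of the strict transforms $X_j'\to Y_j'$. Granting this, A2 applies to the pair $(X_j,Y_j)$ and yields $m(X_j,0)=m(Y_j,0)$ for every $j$.

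Finally I would assemble the pieces. Theorem \ref{multiplicities} gives $k_X(X_j)=k_Y(Y_j)$ for all $j$, while Remark \ref{multip} expresses the multiplicities as
$$m(X,0)=\sum_{j=1}^r k_X(X_j)\,m(X_j,0),\qquad m(Y,0)=\sum_{j=1}^r k_Y(Y_j)\,m(Y_j,0).$$
Substituting $k_X(X_j)=k_Y(Y_j)$ and $m(X_j,0)=m(Y_j,0)$ converts the first sum into the second, so $m(X,0)=m(Y,0)$, establishing A1. The main obstacle I anticipate is precisely the verification in the third paragraph that the blow-spherical homeomorphism between the full tangent cones restricts to one between matched irreducible components, so that A2 may be invoked on each pair $(X_j,Y_j)$ rather than only on the full cones. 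This componentwise application is essential: applying A2 directly to $C(X,0)$ and $C(Y,0)$ would only compare $m(C(X,0),0)$ with $m(C(Y,0),0)$ and would ignore the weighting by the relative multiplicities $k_X(X_j)$, which is exactly the information needed to reach the multiplicities of $X$ and $Y$ themselves.
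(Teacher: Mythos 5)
Your proof is correct and follows essentially the same route as the paper's: pass to the tangent cones via $d_0\varphi$ (Proposition \ref{defi_diferencial}), match irreducible components with equal relative multiplicities (Theorem \ref{multiplicities}), apply Question A2 componentwise, and conclude with Remark \ref{multip}. The only difference is that you make explicit two points the paper leaves implicit --- invoking Lemma \ref{irredutivel} to match irreducible components and verifying, via the product form $(x,t)\mapsto(\nu_0\varphi(x),t)$ of the extension, that $d_0\varphi$ restricts to a blow-spherical homeomorphism $X_j\to Y_j$ --- which is a welcome sharpening of detail rather than a different approach.
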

\begin{proof} 
Obviously, we just need to prove that a positive answer to the Question A2 implies a positive answer to the Question A1. Let $X,Y\subset \C^n$ be two complex analytic set and $\varphi:(X,0)\to (Y,0)$ be a blow-spherical homeomorphism. Let us denote by $X_1,\dots,X_r$ and $Y_1,\dots,Y_s$ the irreducible components of the tangent cones $C(X,0)$ and $C(Y,0)$ respectively. It comes from Proposition \ref{defi_diferencial} and Theorem \ref{multiplicities} that $r=s$ and the blow-spherical homeomorphism $d_0\varphi:C(X,0)\rightarrow C(Y,0)$, up to re-ordering of indexes, sends $X_i$ onto $Y_i$ and $k_X(X_i)=k_Y(Y_i)$ $\forall$ $i$.

We know that $X_i$ and $Y_i$ are irreducible homogeneous algebraic sets. Since the Question A2 has a positive answer, we get $m(X_i,0)=m(Y_i,0)$ $\forall \ i$. Finally, using the Remark \ref{multip}, we obtain $m(X,0)=m(Y,0)$.
\end{proof}

\subsection{Multiplicity of analytic sets with 1-dimensional singular set}\label{subsec:mult_dim_one}

Let $f:\C^n \to \C$ be a homogeneous polynomial with $\deg{f}=d$. We recall the map $\phi:\Sp^{2n-1}\setminus f^{-1}(0)\to \Sp^1$ given by $\phi(z)=\frac{f(z)}{|f(z)|}$ is a locally trivial fibration (see \cite{Milnor:1968}, \S 4). Notice that, $\psi:\C^n\setminus f^{-1}(0)\to \C\setminus \{0\}$ defined by $\psi(z)=f(z)$ is a locally trivial fibration such that its fibers are diffeomorphic the fibers of $\phi$.  Moreover, we can choose as geometric monodromy the homeomorphism $h_f:F_f\to F_f$ given by $h_f(z)=e^{\frac{2\pi i}{d}}\cdot z$, where $F_f:=f^{-1}(1)$ is the (global) Milnor fiber of $f$ (see \cite{Milnor:1968}, \S 9).

In the proof of the Theorem 2.2 in \cite{FernandesS:2016} was proved the following result.
\begin{proposition}\label{acampo_type}
Let $f,g:\C^{n+1}\to \C$ be two reduced homogeneous complex polynomials. If $\varphi:(\C^n,V(f),0)\to (\C^n,V(g),0)$ is a homeomorphism and $\chi (F_f)\not=0$, then $m(V(f),0)=m(V(g),0)$.
\end{proposition}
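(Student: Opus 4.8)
The plan is to recover the degree $d$ of $f$ --- which for a homogeneous polynomial is exactly $m(V(f),0)$ --- from the embedded topological type of the link $K_f=V(f)\cap\Sp^{2n+1}$, by reading it off from the Lefschetz numbers of the monodromy $h_f$; this is the A'Campo-type input (compare \cite{Acampo:1973}). First I would record the geometry of $h_f$. Since $h_f(z)=e^{2\pi i/d}z$, the group $\mu_d$ of $d$-th roots of unity acts freely on the global Milnor fiber $F_f=f^{-1}(1)$ with $h_f$ as a generator, and the projection $z\mapsto[z]$ realizes $F_f$ as an unramified $d$-fold cover of the projective complement $U_f:=\CP^{n}\setminus\mathbb{P}V(f)$; in particular $\chi(F_f)=d\,\chi(U_f)$.

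Next I would compute the Lefschetz numbers $\Lambda(h_f^k)=\sum_i(-1)^i\mathrm{tr}\big(h_{f*}^k\mid H_i(F_f;\Q)\big)$. Since $U_f$ carries a finite CW structure and $F_f\to U_f$ is a finite covering, $F_f$ inherits a finite CW structure with free $\mu_d$-action; hence each cellular chain group $C_i(F_f;\Q)$ is a free $\Q[\mu_d]$-module and, in the representation ring $R(\mu_d)$, one has $\sum_i(-1)^i[H_i(F_f;\Q)]=\chi(U_f)\,[\Q[\mu_d]]$. Evaluating the character of the regular representation $\Q[\mu_d]$ at $h_f^k$ (it equals $d$ when $d\mid k$ and $0$ otherwise) gives
$$\Lambda(h_f^k)=\begin{cases}\chi(F_f), & d\mid k,\\ 0, & d\nmid k.\end{cases}$$
Equivalently, the monodromy zeta function is $(1-t^{d})^{-\chi(U_f)}$. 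Under the hypothesis $\chi(F_f)\neq0$ this shows $d=\min\{k\geq1:\Lambda(h_f^k)\neq0\}$, and the identical computation holds for $g$ with its degree $d'$.

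It remains to see that the integers $\Lambda(h_f^k)$ are invariants of the embedded topology, so that they coincide with those of $g$. Because $V(f)$ and $V(g)$ are cones, the germ homeomorphism $\varphi$ gives a homeomorphism of the embedded links $(\Sp^{2n+1},K_f)\cong(\Sp^{2n+1},K_g)$, hence of the complements $\Sp^{2n+1}\setminus K_f\cong\Sp^{2n+1}\setminus K_g$ carrying the canonical generator of $H^1(\,\cdot\,;\Z)\cong\Z$ to $\pm$ the canonical generator. By the locally trivial Milnor fibration recalled before the statement, such a complement is the mapping torus of $h_f$, so its infinite cyclic cover is homotopy equivalent to $F_f$ with deck transformation $h_f$; thus the characteristic polynomials of $h_{f*}$ on the $H_i(F_f;\Q)$ --- equivalently the Alexander invariants of the complement --- are determined by the complement together with its distinguished $H^1$-class, up to the involution induced by $k\mapsto-k$. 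As the nonvanishing set $\{k>0:\Lambda(h_f^k)\neq0\}=d\,\Z_{>0}$ is symmetric under $k\mapsto-k$, this ambiguity is harmless and we obtain $\{k>0:\Lambda(h_f^k)\neq0\}=\{k>0:\Lambda(h_g^k)\neq0\}$.

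Finally I would conclude. This common set is nonempty (it is $d\,\Z_{>0}$ on the $f$ side), so the computation for $g$ forces $\chi(F_g)\neq0$ and identifies the set with $d'\,\Z_{>0}$; therefore $d\,\Z_{>0}=d'\,\Z_{>0}$ and $d=d'$, i.e. $m(V(f),0)=m(V(g),0)$. The hard part will be the invariance step of the previous paragraph: a topological homeomorphism of the complements need not respect the Milnor fibration, only the underlying space and its distinguished infinite cyclic cover, so the argument must run through the Alexander module of the link complement rather than through the geometric monodromy, while carefully tracking the $t\mapsto t^{-1}$ (i.e. $k\mapsto-k$) ambiguity --- which fortunately leaves invariant the nonvanishing pattern that detects $d$.
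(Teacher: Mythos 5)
Your overall route coincides with the one the paper actually relies on: the paper gives no proof of this proposition but cites the proof of Theorem 2.2 of \cite{FernandesS:2016}, and that argument is exactly the A'Campo-type computation you reconstruct --- $h_f(z)=e^{2\pi i/d}z$ generates a free $\mu_d$-action on $F_f$, so $\Lambda(h_f^k)=0$ for $d\nmid k$ while $\Lambda(h_f^k)=\chi(F_f)$ for $d\mid k$; the link complement is the mapping torus of $h_f$, and the Lefschetz numbers are transported through the infinite cyclic cover, with the $k\mapsto -k$ ambiguity disposed of as you do. Your derivation of the Lefschetz numbers from the free $\mu_d$-CW structure of $F_f$ over $U_f=\CP^n\setminus\mathbb{P}V(f)$ is correct, and arguably cleaner than invoking the fixed point theorem on the open fiber. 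One harmless overstatement: a germ homeomorphism does not ``give a homeomorphism of the embedded links $(\Sp^{2n+1},K_f)\cong(\Sp^{2n+1},K_g)$'' --- it need not respect spheres, even between cones. But since all you use downstream is a homotopy equivalence of link complements with controlled effect on $H^1$, this is repaired by conicalness together with the standard sandwich $B_s\subset\varphi(B_\varepsilon)$, $\varphi(B_s)\subset B_r$ (compare Lemma \ref{teo3}).

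The genuine gap is in the invariance step when $f$ is reducible --- and the proposition says ``reduced'', while the paper applies it to possibly reducible polynomials at the start of the proof of Theorem \ref{mult_dim_one}. If $V(f)$ has $r\geq 2$ irreducible components, then $H^1(\Sp^{2n+1}\setminus K_f;\Z)\cong\Z^r$, not $\Z$; the fibration class $\xi_f$ is the class evaluating to $1$ on each meridian $\mu_1,\dots,\mu_r$. Your equivalence of complements carries each meridian to $\pm$ a meridian (by Lemma \ref{irredutivel} and a generic transverse disk), but a priori with signs $\epsilon_j$ varying from component to component, so the pulled-back class $\Psi^*\xi_g$ need not equal $\pm\xi_f$; the global symmetry of the set $\{k:\Lambda(h^k)\neq0\}$ under $k\mapsto -k$ does not cure this \emph{componentwise} ambiguity. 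If the signs are mixed, the infinite cyclic cover of the $f$-complement that gets compared with $F_g$ is the cover attached to a class evaluating $\epsilon_j$ on $\mu_j$: this is no longer the Milnor fiber of $f$ (formally it would correspond to $\prod f_j^{\epsilon_j}$, which is merely meromorphic), there is no mapping-torus description of the complement for it, and your Lefschetz computation does not apply to its deck transformation. So you must either prove that the signs are coherent (that $\varphi$ preserves, or reverses, the complex co-orientations of all components simultaneously), or first reduce to irreducible $f$ and $g$ --- and that reduction is not free here, because the hypothesis $\chi(F_f)\neq0$ does not pass to the irreducible factors. In the irreducible case $r=1$ one has $H^1\cong\Z$ generated by $\xi_f$, your argument is complete, and it agrees with the cited proof.
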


\begin{definition}
Let $f:\C^{n+1}\to \C$ be a complex polynomial with $\dim \Sing{V(f)}=1$ and $\Sing{V(f)}=C_1\cup...\cup C_r$. Then $b_i(f)$ denotes the $i$-th Betti number of the Milnor fiber of $f$ at the origin, $\mu_j'(f)$ is the Milnor number of a generic hyperplane slice of $f$ at $x_j\in C_j\setminus \{0\}$ sufficiently close to the origin, write $\mu'(f)=\sum\limits_{i=1}^r\mu_i'(f)$.
\end{definition}
\begin{theorem}\label{mult_dim_one}
Let $f,g:(\C^{n+1},0)\to (\C,0)$ be two reduced homogeneous complex polynomials such that $\dim\Sing{V(f_i),0}\leq 1$, where $f=f_1\cdots f_r$ is the decomposition of $f$ in irreducible polynomials. If there exists a homeomorphism $\varphi:(\C^{n+1},V(f),0)\to (\C^{n+1},V(g),0)$, then $m(V(f),0)=m(V(g),0)$.
\end{theorem}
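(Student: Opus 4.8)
The plan is to peel off one irreducible factor at a time, to compute the Euler characteristic of the Milnor fibre of a homogeneous polynomial with one-dimensional singular locus explicitly, and then to feed the outcome into the A'Campo-type Proposition \ref{acampo_type}.

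First I would reduce to the irreducible case. Write $g=g_1\cdots g_s$ for the factorization of $g$ into distinct irreducible polynomials, so that $V(f)=\bigcup_i V(f_i)$ and $V(g)=\bigcup_j V(g_j)$ are the decompositions into irreducible components. Applying Lemma \ref{irredutivel} to $\varphi$ (and to $\varphi^{-1}$) shows that $\varphi$ permutes these components; after re-indexing, $r=s$ and $\varphi$ restricts to a homeomorphism of pairs $(\C^{n+1},V(f_i),0)\to(\C^{n+1},V(g_i),0)$ for each $i$. Because $f$ and $g$ are reduced and homogeneous, $m(V(f),0)=\deg{f}=\sum_i\deg{f_i}$ and $m(V(g),0)=\sum_i\deg{g_i}$, so it suffices to prove $\deg{f_i}=\deg{g_i}$ for every $i$. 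Hence I may assume $f$ and $g$ are irreducible, in which case $\dim\Sing{V(f)}\le 1$ is the actual dimension of the singular set and the invariants $\mu'(f)$, $b_i(f)$ are available.

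Next I would establish a closed formula for $\chi(F_f)$. Projectivizing, $\Sigma:=\{f=0\}\subset\CP^n$ is a hypersurface of degree $d:=\deg{f}$ whose singular locus is the finite set of points determined by the lines making up the one-dimensional cone $\Sing{V(f)}$; at the point coming from the branch $C_j$ the local type of $\Sigma$ is the transverse singularity of $V(f)$ along $C_j$, of Milnor number $\mu_j'(f)$. Comparing $\Sigma$ with a smooth hypersurface of the same degree, which absorbs $\sum_j\mu_j'(f)=\mu'(f)$ vanishing cycles, and using that $F_f\to\CP^n\setminus\Sigma$ is an unramified cyclic $d$-fold covering, additivity of the Euler characteristic gives
\[
\chi(F_f)=1+(-1)^n\big((d-1)^{n+1}-d\,\mu'(f)\big),
\]
and the same identity for $g$. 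The homotopy type of the Milnor fibre of a homogeneous polynomial, hence $\chi(F_f)$ and all the Betti numbers $b_i(f)$, depends only on the embedded topological type of $(\C^{n+1},V(f),0)$; since $\varphi$ is a homeomorphism of pairs, $\chi(F_f)=\chi(F_g)$ and $b_i(f)=b_i(g)$ for all $i$. Whenever $\chi(F_f)\neq 0$, Proposition \ref{acampo_type} applies at once and yields $\deg{f}=\deg{g}$.

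The main obstacle is the exceptional case $\chi(F_f)=0$, equivalently $(d-1)^{n+1}-d\,\mu'(f)=(-1)^{n+1}$, where Proposition \ref{acampo_type} is unavailable and the Euler characteristic alone cannot isolate $d$: the map $d\mapsto(d-1)^{n+1}-d\,\mu'$ need not be injective once $\mu'>0$. To break the tie I would exploit the finer data in the definition preceding the statement. Since $F_f$ is $(n-2)$-connected, only $b_{n-1}(f)$ and $b_n(f)$ are nontrivial reduced Betti numbers, and both are topological invariants, so $b_{n-1}(f)=b_{n-1}(g)$ and $b_n(f)=b_n(g)$. What is needed is a second relation, independent of $\chi$ --- a formula for $b_{n-1}(f)$, or for the primitive part of the monodromy acting on $H^n(F_f)$, in terms of $d$ and the transverse data, together with the consequent topological invariance of $\mu'(f)$ --- so that the pair of equations determines $d$ from the common invariants and forces $\deg{f}=\deg{g}$. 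I expect this control of the eigenvalue contribution of the monodromy, where the individual $\mu_j'(f)$ enter beyond their sum, to be the delicate step; the reduction and the Euler-characteristic computation are comparatively routine.
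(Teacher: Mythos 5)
Your reduction to irreducible factors via Lemma \ref{irredutivel} and additivity of the multiplicity, your Euler-characteristic formula $\chi(F_f)=1+(-1)^n\bigl((d-1)^{n+1}-d\,\mu'(f)\bigr)$ (this is exactly Randell's Theorem 5.11, the relation the paper invokes; your indexing is the correct one for $f:\C^{n+1}\to\C$), and the use of Proposition \ref{acampo_type} when $\chi(F_f)\neq 0$ all coincide with the paper's route. But your proof stops precisely where the theorem's content lies: in the residual case $\chi(F_f)=\chi(F_g)=0$ you assert that a second relation independent of $\chi$ is needed (a formula for $b_{n-1}$, monodromy eigenvalues, etc.) and leave it as a programme. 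No second relation is needed, and closing this case is the one genuine idea of the paper's proof. The two ingredients you name but never combine already suffice: first, $\mu'(f)$ is a known topological invariant of the pair $(\C^{n+1},V(f),0)$ --- this is L\^e \cite{Le:1973b} (Proposition and Th\'eor\`eme 2.3), cited in the paper, not something left to establish --- so $\mu':=\mu'(f)=\mu'(g)\geq 1$; second, your objection that $d\mapsto(d-1)^{n+1}-d\mu'$ need not be injective misses the point that injectivity is only required on the single level set cut out by $\chi=0$. Indeed, putting $t=d-1$, the vanishing of $\chi$ says that $t$ is a positive root of $P(t)=t^{n+1}-\mu'(t+1)+(-1)^n$, and $k-1$ is a positive root of the \emph{same} polynomial because $\mu'(g)=\mu'$. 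Since $\mu'\geq 1$, the coefficient sequence $(1,0,\dots,0,-\mu',(-1)^n-\mu')$ has exactly one sign change, so Descartes' rule of signs yields at most one positive root, forcing $d=k$. This is the paper's argument verbatim (modulo an index shift in its displayed formula, where Randell's relation for $\C^n$ is quoted in the $\C^{n+1}$ setting).

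A secondary omission: after reducing to irreducible factors you tacitly work as if $\dim\Sing{V(f)}=1$, but an irreducible factor may have an isolated singularity, where $\mu'$ is undefined; you also need $d,k>1$ and $\dim\Sing{V(f)}=\dim\Sing{V(g)}$, which the paper extracts from the A'Campo--L\^e theorem (for hypersurfaces, topological-manifold points are smooth points, so $\varphi$ matches singular sets). The paper settles the isolated subcase by suspension: $\T{f}(z,z_{n+2})=f(z)$ on $\C^{n+1}\times\C$ has the same multiplicity, one-dimensional singular set, and carries the homeomorphism $\varphi\times\id$, so the one-dimensional case applies. In your normalization one could instead observe that $\chi=0$ with $\mu'=0$ forces $(d-1)^{n+1}=(-1)^{n+1}$, hence $d=k=2$; either way the subcase has to be addressed rather than skipped.
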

\begin{proof}
By Proposition \ref{acampo_type}, we can suppose $\chi (F_f)=\chi (F_g)=0$ and by additivity of the multiplicity, we can suppose that $f$ and $g$ are irreducible homogeneous polynomials with degree $d$ and $k$, respectively. In addition, by A'Campo-L\^e's Theorem, we can suppose that $d,k>1$. In particular, 
$$\dim \Sing{V(f)}=\dim \Sing{V(g)}.$$

If $\dim \Sing{V(f)}=1$, then by Theorem 5.11 in \cite{Randell:1979}, we have
$$(d-1)^{n}- \mu'(d-1)+(-1)^{n-1}- \mu'(f)=0$$
and
$$(k-1)^{n}- \mu'(k-1)+(-1)^{n-1}- \mu'(g)=0.$$
Thus, we define the polynomial $P:\R\to \R$ by 
$$
P(t)=t^{n}- \mu'(f)t+(-1)^{n-1} - \mu'(f), \quad \forall t\in \R.
$$
Since $\mu'(f)=\mu'(g)$ (see \cite{Le:1973b}, Proposition and Th\'er\`eme 2.3), then $d-1$ and $k-1$ are zeros of polynomial of $P(t)$.
By Descartes' Rule, the polynomial $P(t)$ has at most one positive zero, since $\mu'(f)=\mu'(g)\geq 1$. Thus, $d=k$.

If $\dim \Sing{V(f)}=0$, let $\T{f},\T{g}:\C^n\times \C\to \C$ given by $\T{f}(z,z_{n+1})=f(z)$ and $\T{g}(z,z_{n+1})=g(z)$. It is easy to see that $m(V(\T{f}),0)=m(V(f),0)$ and $m(V(\T{g}),0)=m(V(g),0)$. Moreover,
$V(\T{f})=V(f)\times \C$ and $V(\T{g})=V(g)\times \C$, then we define $\T{\varphi}:(\C^n\times \C,V(\T{f}),0)\to (\C^n\times \C,V(\T{g}),0)$ by $\T{\varphi}(z,z_{n+1})=(\varphi(z),z_{n+1})$. We have that $\T{\varphi}$ is a homeomorphism. Therefore, by first part of this prove, $m(V(\T{f}),0)=m(V(\T{g}),0)$ and this finish the proof.
\end{proof}

\begin{corollary}\label{mult_surface}
Let $f,g:(\C^3,0)\to (\C,0)$ be two reduced homogeneous complex polynomials. If $\varphi:(\C^3,V(f),0)\to (\C^3,V(g),0)$ is a homeomorphism, then $m(V(f),0)=m(V(g),0)$.
\end{corollary}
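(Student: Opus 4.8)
The plan is to derive this statement directly from Theorem \ref{mult_dim_one} by checking that the dimensional hypothesis of that theorem is automatically satisfied once the ambient space is $\C^3$. Specializing Theorem \ref{mult_dim_one} to the case $n+1=3$, i.e. $n=2$, I want to apply it to the given reduced homogeneous polynomials $f,g\colon(\C^3,0)\to(\C,0)$ together with the homeomorphism $\varphi\colon(\C^3,V(f),0)\to(\C^3,V(g),0)$. Writing $f=f_1\cdots f_r$ for the decomposition of $f$ into irreducible homogeneous factors, the only condition that needs to be verified is that each irreducible component satisfies $\dim \Sing{V(f_i)}\leq 1$ at the origin.

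First I would observe that each $V(f_i)$ is an irreducible hypersurface in $\C^3$, hence a complex analytic set of pure dimension $2$. By the standard fact that the singular locus of an irreducible complex analytic set of dimension $d$ is a proper analytic subset, and therefore has dimension at most $d-1$, we obtain $\dim \Sing{V(f_i)}\leq 1$ for every $i$. Thus the hypothesis $\dim \Sing{V(f_i),0}\leq 1$ of Theorem \ref{mult_dim_one} holds for all the irreducible factors of $f$, and no additional assumption on the singularities of $V(f)$ is needed: being a surface in $\C^3$ already forces the singular locus of each component to be at most one-dimensional.

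Having verified the hypothesis, the conclusion $m(V(f),0)=m(V(g),0)$ follows immediately by invoking Theorem \ref{mult_dim_one}. I do not expect any genuine obstacle here, since the entire content of the corollary is already contained in the theorem; the role of the restriction to $\C^3$ is precisely to render the dimension-of-singular-locus hypothesis vacuous. The single point worth stating explicitly in the write-up is the dimension bound on the singular locus of a hypersurface, as this is exactly what pins $\dim \Sing{V(f_i)}$ to be at most $1$ when $V(f_i)$ is a surface, and hence what lets the general-position argument of Theorem \ref{mult_dim_one} apply unconditionally in this codimension-one, dimension-two setting.
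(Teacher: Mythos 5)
Your proposal is correct and matches the paper's (implicit) argument exactly: the corollary is stated as an immediate specialization of Theorem \ref{mult_dim_one}, with the hypothesis $\dim \Sing{V(f_i)} \leq 1$ holding automatically since each $V(f_i)$ is an irreducible surface in $\C^3$, whose singular locus is a proper analytic subset and hence of dimension at most $1$. Nothing further is needed.
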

Let $f\colon (\C^n,0)\to (\C,0)$ be the germ of an analytic function at origin. If we write 
$$f=f_m+f_{m+1}+\cdots+f_k+\cdots$$ 
where each $f_k$ is a homogeneous polynomial of degree $k$ and $f_m\neq 0$, we define ${\bf in} (f):=f_m$. Thus, by Theorem \ref{reduction}, we have the following consequences. 

\begin{corollary}\label{mult_dim_one_blow}
Let $f,g:(\C^{n+1},0)\to (\C,0)$ be two reduced complex analytic functions such that $\dim\Sing{V(f_i),0}\leq 1$, where ${\bf in} (f)=f_1\cdots f_r$ is the decomposition of ${\bf in} (f)$ in irreducible polynomials. If $\varphi:(\C^{n+1},V(f),0)\to (\C^{n+1},V(g),0)$ is a blow-spherical homeomorphism, then $m(V(f),0)=m(V(g),0)$.
\end{corollary}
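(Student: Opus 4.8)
The plan is to run the reduction of Theorem \ref{reduction}, but to feed in Theorem \ref{mult_dim_one} at the place where Question A2 was assumed. Since $V(f)$ and $V(g)$ are hypersurfaces, their tangent cones are $C(V(f),0)=V(\ini{f})$ and $C(V(g),0)=V(\ini{g})$, which are homogeneous. The delicate point to respect throughout is that $\ini{f}$ need not be reduced even though $f$ is: the multiplicity $m(V(f),0)=\deg{\ini{f}}$ counts the irreducible factors of $\ini{f}$ with their multiplicities, whereas $V(\ini{f})$, viewed as a reduced set, only remembers the distinct factors. What records the difference are the relative multiplicities $k_{V(f)}$, and the whole strategy is built so that these are transported correctly by $d_0\varphi$; this is exactly the content of Theorem \ref{multiplicities}. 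In particular one cannot simply apply Theorem \ref{mult_dim_one} to the pair $(\ini{f},\ini{g})$.

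First I would apply Proposition \ref{defi_diferencial} to the ambient blow-spherical homeomorphism $\varphi$ to obtain $d_0\varphi$. Because $\varphi$ carries $(\C^{n+1},V(f))$ to $(\C^{n+1},V(g))$, its extension $\varphi'$ carries $\partial V(f)'$ onto $\partial V(g)'$, so $d_0\varphi$ restricts to a blow-spherical --- hence, by the definition, topological --- homeomorphism of pairs $(\C^{n+1},V(\ini{f}),0)\to(\C^{n+1},V(\ini{g}),0)$. Next I would decompose the tangent cones into irreducible components $C(V(f),0)=\bigcup_i X_i$ and $C(V(g),0)=\bigcup_i Y_i$, each $X_i=V(p_i)$, $Y_i=V(q_i)$ for irreducible homogeneous $p_i,q_i$; Theorem \ref{multiplicities} (with Lemma \ref{irredutivel} to align the indices) lets me reindex so that $Y_i=d_0\varphi(X_i)$ and $k_{V(f)}(X_i)=k_{V(g)}(Y_i)$ for every $i$. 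Since $d_0\varphi$ is an ambient homeomorphism of $\C^{n+1}$ carrying the single subset $V(p_i)$ onto $V(q_i)$, it is in particular a homeomorphism of pairs $(\C^{n+1},V(p_i),0)\to(\C^{n+1},V(q_i),0)$. Each $p_i$ is an irreducible factor of $\ini{f}$, so $\dim\Sing{V(p_i)}\leq 1$ by hypothesis, and Theorem \ref{mult_dim_one} yields $m(X_i,0)=m(V(p_i),0)=m(V(q_i),0)=m(Y_i,0)$.

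It then remains to reassemble the pieces via Remark \ref{multip}:
$$
m(V(f),0)=\sum_i k_{V(f)}(X_i)\,m(X_i,0)=\sum_i k_{V(g)}(Y_i)\,m(Y_i,0)=m(V(g),0).
$$
The only genuine obstacle I anticipate is precisely the bookkeeping forced by non-reduced initial forms: one must be sure that $d_0\varphi$ matches the irreducible components of the two tangent cones together with their relative multiplicities (handled by Theorem \ref{multiplicities}) and that each single component pair inherits a bona fide ambient homeomorphism to which Theorem \ref{mult_dim_one} can be applied. The homogeneity of the tangent cones, the fact that a blow-spherical homeomorphism is in particular a homeomorphism, and the hypothesis that every irreducible factor of $\ini{f}$ has at most one-dimensional singular set are exactly what make this reduction to Theorem \ref{mult_dim_one} legitimate.
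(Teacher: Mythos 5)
Your proof is correct and is essentially the paper's own argument: the paper obtains Corollary \ref{mult_dim_one_blow} by invoking the reduction of Theorem \ref{reduction} (i.e.\ Proposition \ref{defi_diferencial}, Theorem \ref{multiplicities} with Lemma \ref{irredutivel}, and Remark \ref{multip}) with Theorem \ref{mult_dim_one} supplying the homogeneous irreducible case in place of the Question A2 oracle, exactly as you do. Your explicit care with the possibly non-reduced initial form $\ini{f}$ --- transporting the relative multiplicities $k_{V(f)}(X_i)=k_{V(g)}(Y_i)$ and only applying Theorem \ref{mult_dim_one} componentwise via the ambient homeomorphism $d_0\varphi$, rather than to the pair $(\ini{f},\ini{g})$ directly --- is precisely the bookkeeping the paper's reduction performs.
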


\begin{corollary}\label{mult_surface_blow}
Let $f,g:(\C^3,0)\to (\C,0)$ be two reduced complex analytic functions. If $\varphi:(\C^3,V(f),0)\to (\C^3,V(g),0)$ is a blow-spherical homeomorphism, then $m(V(f),0)=m(V(g),0)$.
\end{corollary}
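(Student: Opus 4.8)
The plan is to obtain Corollary \ref{mult_surface_blow} as the special case $n+1=3$ of Corollary \ref{mult_dim_one_blow}; the only point requiring attention is that the dimensional hypothesis of that corollary is automatically fulfilled in $\C^3$. To begin, I would write $\ini{f}=f_1\cdots f_r$ for the factorization of the initial form into irreducible homogeneous polynomials, exactly as in the statement of Corollary \ref{mult_dim_one_blow}. Since here $f$ is defined on $\C^3=\C^{n+1}$ with $n=2$, each factor $f_i\in\C[z_1,z_2,z_3]$ is a nonconstant irreducible homogeneous polynomial, hence square-free, so that $V(f_i)\subset\C^3$ is a reduced complex hypersurface of pure dimension $2$.

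The heart of the verification, such as it is, is the remark that the singular locus of a reduced complex hypersurface is a proper analytic subset; consequently $\dim\Sing{V(f_i),0}\leq\dim V(f_i)-1=1$ for every $i$. Because $f_i$ is homogeneous, $\Sing{V(f_i)}$ is a cone through the origin, so its dimension as a germ at $0$ agrees with its dimension as a set, and the bound $\dim\Sing{V(f_i),0}\leq 1$ holds unconditionally. This is precisely the hypothesis needed to invoke Corollary \ref{mult_dim_one_blow}.

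With this in place, I would apply Corollary \ref{mult_dim_one_blow} to the blow-spherical homeomorphism $\varphi:(\C^3,V(f),0)\to(\C^3,V(g),0)$ and conclude immediately that $m(V(f),0)=m(V(g),0)$. I do not expect a genuine obstacle: the result is a clean specialization to dimension three, and the passage from the general statement to the surface case is driven entirely by the automatic codimension bound above. The one subtlety worth flagging is that $\ini{f}$ itself need not be reduced even though $f$ is reduced; but this causes no difficulty, since Corollary \ref{mult_dim_one_blow} demands the dimension bound only on the separate irreducible factors $f_i$, and each irreducible factor is automatically reduced.
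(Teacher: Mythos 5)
Your proof is correct and takes essentially the same approach as the paper, which states Corollary \ref{mult_surface_blow} without separate proof precisely as the specialization $n+1=3$ of Corollary \ref{mult_dim_one_blow}: the hypothesis $\dim\Sing{V(f_i),0}\leq 1$ is automatic because each irreducible factor $f_i$ of $\ini{f}$ is square-free, so $V(f_i)\subset\C^3$ is a reduced surface whose singular locus is a proper analytic subset of dimension at most one. Your supplementary remarks (reducedness of the irreducible factors, the conic structure of $\Sing{V(f_i)}$) correctly fill in the verification the paper leaves implicit.
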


\subsection{Multiplicity of aligned singularities}\label{subsec:mult_aligned}
\begin{definition}
If $h : U \to \C$ is an analytic function, a good stratification for $h$ at a point $p \in V(h)$ is an analytic stratification, $S=\{S_{\alpha}\}$, of the hypersurface $V(h)$ in a neighborhood, $U$, of $p$ such that the smooth part of $V(h)$ is a stratum and so that the stratification satisfies Thom's $a_h$ condition with respect to $U \setminus  V(h)$. That is, if $q_i$ is a sequence of points in $U \setminus  V(h)$ such that $q_i \to q\in S_{\alpha}$ and $T_{q_i} V(h - h(q_i))$ converges to some hyperplane $T$, then $T_qS_{\alpha} \subset T$. 
\end{definition}
\begin{definition}
If $h : (U,0) \to (\C,0)$ is an analytic function, then an aligned good stratification for $h$ at the origin is a good stratification for $h$ at the origin in which the closure of each stratum of the singular set is smooth at the origin. If such an aligned good stratification exists, we say that $h$ has an aligned singularity at the origin. 
\end{definition}
\begin{theorem}\label{mult_aligned}
Let $f,g:(\C^n,0)\to (\C,0)$ be two reduced homogeneous complex polynomials. Suppose that $f$ and $g$ have an aligned singularity at the origin.  
If $\varphi:(\C^n,V(f),0)\to (\C^n,V(g),0)$ is a homeomorphism, then $m(V(f),0)=m(V(g),0)$.
\end{theorem}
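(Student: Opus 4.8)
The plan is to follow the strategy of Theorem \ref{mult_dim_one}, replacing the one-dimensional tools of Randell and L\^e by Massey's theory of L\^e cycles and L\^e numbers, which is tailored precisely to aligned singularities. Since $f$ and $g$ are homogeneous, their multiplicities at the origin equal their degrees $d$ and $k$, so it suffices to prove $d=k$. First I would carry out the same preliminary reductions as in Theorem \ref{mult_dim_one}: by Proposition \ref{acampo_type} one may assume $\chi(F_f)=\chi(F_g)=0$; then, because $\varphi$ is an ambient homeomorphism that carries each irreducible component of $V(f)$ onto one of $V(g)$ by Lemma \ref{irredutivel}, additivity of the multiplicity reduces the problem to the case where $f$ and $g$ are irreducible; and by A'Campo--L\^e's theorem one may assume $d,k>1$. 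In particular the common value $s:=\dim\Sing{V(f)}=\dim\Sing{V(g)}$ is a topological invariant of the situation.

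Next I would invoke Massey's handle decomposition of the Milnor fibre: for a function with an aligned singularity the L\^e numbers $\lambda^0_f,\dots,\lambda^s_f$, computed in generic coordinates, are defined, and $F_f$ is obtained from a point by attaching $\lambda^i_f$ cells of dimension $n-1-i$. This produces an Euler-characteristic formula
$$
\tilde\chi(F_f)=\sum_{i=0}^{s}(-1)^{\,n-1-i}\,\lambda^i_f .
$$
Because $f$ is homogeneous of degree $d$, each L\^e number is a polynomial expression in $d-1$ whose coefficients are transverse L\^e data along the strata of $\Sing{V(f)}$; in particular the top number is $\lambda^0_f=(d-1)^{n}$ minus corrections coming from the positive-dimensional strata, exactly as the Randell relation records for $s=1$. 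Substituting into the formula above and imposing $\chi(F_f)=0$ from the first step, I obtain a polynomial identity $P_f(d-1)=0$ whose coefficients are built solely from the transverse L\^e data of $f$, and symmetrically $P_g(k-1)=0$.

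The heart of the matter, and the step I expect to be the main obstacle, is the topological invariance of these transverse L\^e data under $\varphi$. For $s=1$ this is precisely L\^e's theorem that the generic transverse Milnor number $\mu'$ is an embedded-topological invariant, as used in Theorem \ref{mult_dim_one}. For aligned singularities of arbitrary dimension I would argue by induction on $s$ via generic hyperplane slices: the aligned good stratification restricts to an aligned good stratification of a generic slice, the dimension of its singular set drops by one, and the L\^e--Iomdin formulas control the resulting change of the L\^e numbers. Since the embedded topological type is preserved by $\varphi$ on generic slices, the inductive hypothesis transports invariance from the sliced situation back to $f$ and $g$, giving $\lambda^i_f=\lambda^i_g$ for $1\le i\le s$. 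This is delicate because L\^e numbers are not topological invariants in general; it is the aligned hypothesis together with the reduction to successive transverse sections that makes the relevant combinations invariant.

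Finally, the equalities $\lambda^i_f=\lambda^i_g$ for $i\ge 1$ force $P_f$ and $P_g$ to be one and the same polynomial $P(t)$, of which both $d-1$ and $k-1$ are positive roots. As in Theorem \ref{mult_dim_one}, a sign analysis of the coefficients of $P$ shows that it has a single sign change, so Descartes' Rule of Signs permits at most one positive root; hence $d-1=k-1$ and $d=k$. Therefore $m(V(f),0)=m(V(g),0)$, as required.
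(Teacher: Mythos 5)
Your skeleton coincides with the paper's proof: both arguments encode the degree in a polynomial whose coefficients are L\^e numbers and finish with Descartes' rule of signs. But the step you yourself flag as the heart of the matter --- the topological invariance of the L\^e numbers of an aligned singularity --- is exactly where your argument has a genuine gap. Your induction transports invariance through generic hyperplane slices, asserting that ``the embedded topological type is preserved by $\varphi$ on generic slices.'' A homeomorphism of pairs does not carry hyperplanes to hyperplanes, let alone generic slices of $V(f)$ to generic slices of $V(g)$, so no homeomorphism between the sliced pairs is induced, and the L\^e--Iomdin formulas compare $f$ with modifications of $f$ rather than with $g$; even in the base case $s=1$ the invariance of the transverse Milnor number $\mu'$ is a nontrivial theorem of L\^e (the one quoted in the proof of Theorem \ref{mult_dim_one}), not a consequence of slicing. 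The missing statement is available off the shelf: Corollary 7.8 of \cite{Massey:1995} says precisely that the L\^e numbers of an aligned singularity, in aligned coordinates, are invariants of the local ambient topological type, and this is what the paper invokes to get $\lambda_{f,z}^i(0)=\lambda_{g,z}^i(0)$ for $i=0,\dots,s$.

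The remaining deviations are detours that either add nothing or introduce secondary problems. The paper performs none of your preliminary reductions: it only assumes $d,k>1$, and it gets the polynomial identity directly from Corollary 4.7 of \cite{Massey:1995}, which for a homogeneous $f$ of degree $d$ yields $(d-1)^n=\sum_i \lambda_{f,z}^i(0)(d-1)^i$, so that $d-1$ and $k-1$ are positive roots of $t^n-\sum_i\lambda_z^i t^i$, a polynomial with one sign change since $\lambda_z^i\geq 0$. Your route through the handle decomposition only gives the alternating sum $\tilde\chi(F_f)=\sum_i(-1)^{n-1-i}\lambda_f^i$, whose coefficients alternate in sign; imposing $\chi(F_f)=0$ via Proposition \ref{acampo_type} does not by itself produce the nonnegative-coefficient identity Descartes' rule needs, and your phrase ``each L\^e number is a polynomial expression in $d-1$'' is really Massey's Corollary 4.7 in disguise. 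Finally, the reduction to irreducible factors is not innocent here: alignment is hypothesized for $f$ and $g$, and it is not clear that the irreducible factors of an aligned $f$ again have aligned singularities, whereas the paper's proof never needs irreducibility.
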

\begin{proof}
We can suppose that $d=m(V(f),0)>1$ and $k=m(V(g),0)>1$. By Corollary 4.7 in \cite{Massey:1995}, we have
$$
(d-1)^n=\sum\limits_{i=1}^s\lambda_{f,z}^i(0)(d-1)^i
$$
and 
$$
(k-1)^n=\sum\limits_{i=1}^s\lambda_{g,z}^i(0)(k-1)^i,
$$
where $s=\dim \Sing{V(f)}=\dim \Sing{V(g)}$ and $\lambda_{f,z}^0(0),\cdots,\lambda_{f,z}^s(0)$ (resp. $\lambda_{g,z}^0(0)$,$\cdots$, $\lambda_{g,z}^s(0)$) are the L\^e's numbers of $f$ (resp. $g$) at the origin (see the definition and some properties of the L\^e's numbers in \cite{Massey:1995}). By Corollary 7.8 in \cite{Massey:1995}, we obtain $\lambda_{f,z}^i(0)=\lambda_{g,z}^i(0)$, for $i=0,...,s$. Then $d-1$ and $k-1$ are zeros of the following equation 
\begin{equation}\label{eq_le}
t^n-\sum\limits_{i=1}^s\lambda_z^it^i=0,
\end{equation}
where $\lambda_z^i:=\lambda_{f,z}^i(0)$, for $i=0,...,s$. By Descartes' Rule, the equation (\ref*{eq_le}) has only one positive zero, since $\lambda_z^i\geq 0$, for $i=0,...,s$. Then $d-1=k-1$, i.e., $m(V(f),0)=m(V(g),0)$.
\end{proof}

\begin{corollary}\label{mult_aligned_blow}
Let $f,g:(\C^n,0)\to (\C,0)$ be two reduced complex analytic function. Suppose that $f_1,...,f_s,g_1,...,g_s$ have an aligned singularity at the origin, where ${\bf in} (f)=f_1\cdots f_s$ (resp. ${\bf in} (g)=g_1\cdots g_s$) is the decomposition of ${\bf in} (f)$ (resp. ${\bf in} (f)$) in irreducible polynomials.  
If $\varphi:(\C^n,V(f),0)\to (\C^n,V(g),0)$ is a blow-spherical homeomorphism, then $m(V(f),0)=m(V(g),0)$.
\end{corollary}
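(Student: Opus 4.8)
The plan is to run the argument of Theorem \ref{reduction} with Theorem \ref{mult_aligned} playing the role of the positive answer to Question A2 on the relevant homogeneous pieces. First I would reduce the statement to the level of tangent cones. For a reduced analytic germ written as $f=f_m+f_{m+1}+\cdots$ one has $C(V(f),0)=V({\bf in}(f))$ as sets, and the multiplicity of a hypersurface equals the degree of its initial form, so $m(V(f),0)$ is the degree of ${\bf in}(f)$, and likewise for $g$. Thus it suffices to compare the two homogeneous hypersurfaces $V({\bf in}(f))$ and $V({\bf in}(g))$, whose irreducible components are $V(f_1),\dots,V(f_s)$ and $V(g_1),\dots,V(g_s)$.

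Next I would produce the linking homeomorphism of cones. Applying Proposition \ref{defi_diferencial} to the blow-spherical homeomorphism $\varphi$ of the ambient pair gives $d_0\varphi\colon C(\C^n,0)\to C(\C^n,0)$, that is, a blow-spherical self-map of $\C^n$. A blow-spherical homeomorphism restricts to an honest homeomorphism off the origin and extends continuously at $0$, so $d_0\varphi$ is a genuine germ homeomorphism of $(\C^n,0)$; and since it is built from $\nu_0\varphi$ on the tangent links of the preserved pair, it carries $V({\bf in}(f))$ onto $V({\bf in}(g))$. By Lemma \ref{irredutivel} it sends irreducible components to irreducible components, and by Theorem \ref{multiplicities}, after reindexing so that $V(f_i)\mapsto V(g_i)$, the relative multiplicities satisfy $k_{V(f)}(V(f_i))=k_{V(g)}(V(g_i))$ for every $i$.

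Then I would invoke the homogeneous aligned result component by component. For each fixed $i$, viewing $d_0\varphi$ as a self-homeomorphism of $\C^n$ that maps $V(f_i)$ onto $V(g_i)$, I obtain a homeomorphism of pairs $(\C^n,V(f_i),0)\to(\C^n,V(g_i),0)$. Since $f_i$ and $g_i$ are homogeneous with an aligned singularity by hypothesis, Theorem \ref{mult_aligned} yields $m(V(f_i),0)=m(V(g_i),0)$. Finally, the additivity formula of Remark \ref{multip} closes the argument: $m(V(f),0)=\sum_i k_{V(f)}(V(f_i))\,m(V(f_i),0)=\sum_i k_{V(g)}(V(g_i))\,m(V(g_i),0)=m(V(g),0)$.

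The step I expect to be the main obstacle is verifying that $d_0\varphi$ genuinely supplies the \emph{ambient} homeomorphism $(\C^n,V(f_i),0)\to(\C^n,V(g_i),0)$ that Theorem \ref{mult_aligned} requires, rather than merely an intrinsic identification of the cones $V(f_i)\to V(g_i)$; this is exactly the point where the hypothesis on the individual irreducible factors is used instead of on the products. A related technical care is needed when ${\bf in}(f)$ has repeated factors, where one must check that the relative multiplicities $k_{V(f)}(V(f_i))$ correctly absorb the gap between the reduced tangent cone and the degree of ${\bf in}(f)$. Everything else is a routine combination of Proposition \ref{defi_diferencial}, Theorem \ref{multiplicities}, and Remark \ref{multip}.
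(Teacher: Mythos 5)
Your proof is correct and is essentially the paper's own argument: the paper states this corollary as a direct consequence of the method of Theorem \ref{reduction} (i.e.\ Proposition \ref{defi_diferencial}, Theorem \ref{multiplicities}, Lemma \ref{irredutivel} and Remark \ref{multip} applied to the ambient blow-spherical homeomorphism), with Theorem \ref{mult_aligned} supplying the component-wise equality $m(V(f_i),0)=m(V(g_i),0)$, which is exactly your route. The two points you flag are handled just as you indicate: $d_0\varphi$ is a genuine self-homeomorphism of $\C^n$ carrying $V(f_i)$ onto $V(g_i)$, hence gives the ambient pair homeomorphism that Theorem \ref{mult_aligned} needs, and when ${\bf in}(f)$ has repeated factors the sum in Remark \ref{multip} runs over the \emph{distinct} irreducible components of the tangent cone, with the relative multiplicities $k_{V(f)}(V(f_i))$ absorbing the factor multiplicities.
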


\subsection{The Question A2 in the case of families of hypersurfaces}\label{subsec:mult_families}
\begin{definition}
The family of complex analytic functions $\{f_t\}_{t\in [0,1]}$ (resp. the family of complex analytic hypersurfaces $\{V(f_t)\}_{t\in [0,1]}$) is said to be {\bf topologically $\mathcal{R}$-equi\-sin\-gular} (resp. {\bf topologically $V$-equisingular}) if there are an open $U\subset \C^n$ and a continuous map $\varphi:U\times [0,1] \to \C^n$ such that $\varphi_t:=\varphi(\cdot,t):U \to \varphi(U\times \{t\})$ is a homeomorphism, $\varphi(0,t)=0$ and $f_t=f_0\circ\varphi_t$ (resp. $\varphi_t(V(f_t))=V(f_0)$) for all $t\in [0,1]$.
\end{definition}
\begin{remark}
{\rm Changing $\varphi_t$ by $\varphi_t\circ \varphi_0^{-1}$, we can suppose that $\varphi_0=$id.}
\end{remark}

\begin{definition}
Let $\{f_t\}_{t\in [0,1]}$ be an analytic family of functions. We say that the family $\{f_t\}_{t\in [0,1]}$ (resp. $\{V(f_t)\}_{t\in [0,1]}$) is {\bf equimultiple} if ${\rm ord}_0 (f_0)={\rm ord}_0 (f_t)$ (resp. $m(V(f_t),0)=m(V(f_0),0)$) for all $t\in [0,1]$.
\end{definition}

\begin{lemma}[\cite{Ephraim:1976a}, Theorem 2.6]\label{teo1}
If the germ of $V=V(f)$ at origin is irreducible, then there exists $\varepsilon >0$ such that for any $0<r<\varepsilon$, $H_1(B_r\setminus V;\Z)\cong \Z$.
\end{lemma}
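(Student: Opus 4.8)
The plan is to identify $H_1(B_r\setminus V;\Z)$ with the group freely generated by the meridians of the irreducible components of $V$, and then to use irreducibility of the germ to see that there is exactly one such generator. First I would fix $r>0$ small enough that $V\cap B_r$ is an irreducible representative of the germ $(V,0)$. Then the singular locus $\Sigma:=\Sing{V}\cap B_r$ is a closed analytic subset of complex dimension $\leq n-2$, hence of real codimension $\geq 4$ in $B_r$, while the smooth locus $D:=\Reg{V}\cap B_r$ is a connected smooth complex hypersurface in $B_r\setminus\Sigma$; connectedness of $\Reg{V}$ is the classical analytic description of irreducibility, and this is the only place the hypothesis enters.

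Next I would remove the singular locus. Set $W:=B_r\setminus\Sigma$ and note the crucial identification $B_r\setminus V=W\setminus D$. Choosing a Whitney stratification of $\Sigma$, every stratum has real codimension $\geq 4$ in $B_r$, so the inclusion $W\hookrightarrow B_r$ induces isomorphisms $H_i(W;\Z)\cong H_i(B_r;\Z)$ for $i\leq 2$. Since $B_r$ is contractible, this yields $H_1(W;\Z)=H_2(W;\Z)=0$ and $W$ connected.

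Then I would apply the Thom--Gysin mechanism to the now-smooth divisor $D\subset W$. As $D$ is a closed, smooth, complex (hence canonically oriented, real codimension $2$) submanifold of $W$, a tubular neighbourhood together with excision and the Thom isomorphism give $H_k(W,W\setminus D;\Z)\cong H_{k-2}(D;\Z)$. Feeding this into the long exact sequence of the pair $(W,W\setminus D)$ produces the exact piece $H_2(W;\Z)\to H_0(D;\Z)\to H_1(W\setminus D;\Z)\to H_1(W;\Z)$. The two outer groups vanish by the previous step and $H_0(D;\Z)=\Z$ because $D$ is connected, whence $H_1(B_r\setminus V;\Z)=H_1(W\setminus D;\Z)\cong\Z$. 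To pin down the generator I would take the meridian $\mu$ of a small oriented $2$-disk meeting $D$ transversally once; using that $f$ is reduced, so that it vanishes to order $1$ along $D$, one checks that $f_\ast[\mu]$ generates $H_1(\C^\ast;\Z)$ under $f\colon B_r\setminus V\to\C^\ast$, confirming the isomorphism.

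The main obstacle is the pair of homological inputs concerning a genuinely singular set. Justifying $H_i(W;\Z)\cong H_i(B_r;\Z)$ for $i\leq 2$ requires the codimension estimate for deleting the stratified set $\Sigma$; I would handle it by inducting over the strata from the lowest dimensional upward, the local model for removing a codimension-$c$ stratum being $\R^{m-c}\times(\R^c\setminus\{0\})$, which affects homology only from degree $c-1\geq 3$ onward. The Thom isomorphism must then be applied to the non-compact divisor $D$ inside the open manifold $W$: orientability is free from the complex structure, but one should set up the tubular neighbourhood and excision carefully for $D$ as a closed submanifold of $W$ (not of $B_r$). Once these two ingredients are in place the computation is forced, and irreducibility enters solely through the connectedness of $D$, which turns the free rank $\#\{\text{components}\}$ into $1$.
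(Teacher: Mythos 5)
Your argument is correct, but note first that the paper offers no proof of this statement at all: Lemma \ref{teo1} is imported verbatim from Ephraim (\cite{Ephraim:1976a}, Theorem 2.6), so there is no in-paper proof to compare against, and your proposal must stand on its own. It does. The route you take --- delete the singular set $\Sigma$, of real codimension $\geq 4$, so that $H_1$ and $H_2$ of the ambient ball are unchanged, then run the Thom--Gysin sequence of the smooth divisor $D=\Reg{V}\cap B_r$ inside $W=B_r\setminus\Sigma$ to get $H_1(B_r\setminus V;\Z)\cong H_0(D;\Z)\cong\Z$ --- is the classical proof of this fact, and each step is sound: $D$ is closed in $W$, its normal bundle is a complex line bundle and hence canonically oriented, and the exact piece $H_2(W)\to H_0(D)\to H_1(W\setminus D)\to H_1(W)$ forces the conclusion, with irreducibility entering exactly where you say, through connectedness of the regular locus. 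Two points deserve more care in a full write-up. First, the lemma asserts the isomorphism for \emph{every} $0<r<\varepsilon$ with a single $\varepsilon$, while you fix one suitable $r$ at the outset; you should choose $\varepsilon$ as a radius of local conic structure (Milnor), which guarantees that $V\cap B_r$ is an irreducible representative with connected regular part for all $r<\varepsilon$ simultaneously, after which your argument runs verbatim for each such $r$. Second, the deletion step ($H_i(W)\cong H_i(B_r)$ for $i\leq 2$) should be anchored either to the mapping-cylinder neighborhoods furnished by a Whitney stratification of $\Sigma$, or to the standard fact that removing a closed analytic subset of complex codimension $\geq 2$ from a complex manifold leaves $\pi_i$ unchanged for $i\leq 2$; your local-model induction is the right idea but is the one genuinely nontrivial input and should not be left as a sketch. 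Finally, your closing paragraph identifying the generator with a meridian and checking $f_*[\mu]$ generates $H_1(\C\setminus\{0\};\Z)$ is not needed for this statement --- it is precisely the content of the paper's Lemma \ref{teo2} (Ephraim's Theorem 2.7) --- so you may drop it or flag it as a bonus.
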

\begin{lemma}[\cite{Ephraim:1976a}, Theorem 2.7]\label{teo2}
Let $V$ be a hypersurface, and suppose that the germ of $V$ at the origin is irreducible. Let $f$ be an analytic function on $B_{\varepsilon}$ which generates the ideal of $V$ at all ponts of $B_{\varepsilon}$. Then $f_*:H_1(B_r\setminus V;\Z)\to H_1(\C\setminus\{0\};\Z)$ is a isomorphism for all $r$, $0<r<\varepsilon $.
\end{lemma}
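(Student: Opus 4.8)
The plan is to lean on Lemma~\ref{teo1}, which already pins down both groups as infinite cyclic. Indeed $H_1(B_r\setminus V;\Z)\cong\Z$ by Lemma~\ref{teo1}, while $H_1(\C\setminus\{0\};\Z)\cong\Z$ because $\C\setminus\{0\}$ deformation retracts onto a circle. A homomorphism $\Z\to\Z$ is an isomorphism exactly when it is surjective, so the entire statement reduces to exhibiting a single loop in $B_r\setminus V$ whose image under $f$ represents a generator of $H_1(\C\setminus\{0\};\Z)$. Note that $f$ does map $B_r\setminus V$ into $\C\setminus\{0\}$, since $V$ is precisely the zero set of $f$, so $f_*$ is well defined.

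First I would fix a smooth point $p$ of $V$ in the interior of $B_r$; such points exist and accumulate at the origin because the singular locus of the hypersurface $V$ has strictly smaller dimension. Because $f$ generates the ideal $I(V)$ of functions vanishing on $V$ at every point, its germ at $p$ generates the ideal of the \emph{smooth} hypersurface $V$ at $p$. Hence in suitable local analytic coordinates centred at $p$ one may write $V=\{z_1=0\}$ and $f=z_1\cdot u$ with $u$ a local unit; in particular $f$ vanishes to order one along $V$ near $p$.

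Next I would take the meridian loop $\gamma(\theta)=(\delta e^{i\theta},p')$, $\theta\in[0,2\pi]$, a small circle in the normal slice to $V$ at $p$, with $\delta>0$ chosen small enough that $\gamma$ stays inside $B_r\setminus V$ and that $u\circ\gamma$ remains in a half-plane missing $0$. Then $f(\gamma(\theta))=\delta e^{i\theta}\,u(\gamma(\theta))$, and the winding number of $f\circ\gamma$ about $0$ is the sum of the winding numbers of its factors: the factor $\delta e^{i\theta}$ contributes $1$, while the unit $u\circ\gamma$, staying near the nonzero value $u(p)$, contributes $0$. Thus $f_*[\gamma]$ is a generator of $H_1(\C\setminus\{0\};\Z)$, so $f_*$ is surjective and therefore an isomorphism.

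The main obstacle, and the only place where the hypotheses genuinely enter, is the reduction to order-one vanishing at a smooth point: here the irreducibility of the germ (which, via Lemma~\ref{teo1}, guarantees that $H_1$ is exactly $\Z$ rather than a larger free group) together with the assumption that $f$ generates the honest ideal $I(V)$—not merely some ideal with the same zero set—is indispensable, since a nonreduced generator would produce a meridian winding number equal to the vanishing order rather than $1$. A dual, fully equivalent route is to show that the closed form $\frac{1}{2\pi i}\,\frac{df}{f}$ generates $H^1(B_r\setminus V;\Z)$ by pairing it against $[\gamma]$; I would keep the winding-number computation as the primary line, as it sidesteps the integrality bookkeeping of the de Rham approach.
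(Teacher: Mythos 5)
Your proof is correct: the reduction to surjectivity via Lemma \ref{teo1} (a surjective homomorphism between groups isomorphic to $\Z$ is automatically injective), the meridian loop around a smooth point $p$ of $V$, and the order-one vanishing $f=z_1u$ forced by the hypothesis that $f$ generates the honest (reduced) ideal of $V$ at $p$ combine to give the winding-number~$1$ computation, and you correctly note that $f$ is nonvanishing off $V$ so that $f_*$ is defined. Note that the paper itself does not prove this statement --- it quotes it as Theorem 2.7 of Ephraim \cite{Ephraim:1976a} --- and your argument is essentially the standard one underlying Ephraim's result, with the roles of irreducibility (via Lemma \ref{teo1}) and of the ideal-generation hypothesis identified exactly where they are needed.
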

In the next result, $U_1$ and $U_2$ are opens of $\C^n$, $V_1$ and $V_2$ are hypersurfaces of $\C^n$.

\begin{lemma}[\cite{Ephraim:1976a}, Theorem 2.8]\label{teo3}
Suppose $\varepsilon$ is chosen as above to serve for both $V_1$ and $V_2$. Assume
$$
\varphi : (U_1,V_1,0)\to (U_2,V_2,0)
$$
is a homeomorphism. Choose $0<r<\varepsilon$ and $0<s<\varepsilon$ such that $B_r\subset \varphi (B_{\varepsilon})$ and $\varphi(B_s)\subset B_r$. Then, $\varphi_*:H_1(B_s\setminus V_1;\Z)\to H_1(B_r\setminus V_2;\Z)$ is an isomorphism.
\end{lemma}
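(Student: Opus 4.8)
The plan is to deduce the statement from two ingredients: a purely algebraic ``sandwich'' observation together with the fact that the inclusion of one concentric punctured ball into a larger one induces an isomorphism on $H_1$. For the first ingredient I record that, given a chain of abelian group homomorphisms $A\xrightarrow{u}B\xrightarrow{v}C\xrightarrow{w}D$ in which the two length-two composites $v\circ u$ and $w\circ v$ are isomorphisms, the middle map $v$ is itself an isomorphism: indeed $v\circ u$ surjective forces $v$ surjective, while $w\circ v$ injective forces $v$ injective. I would apply this with $v=\varphi_*\colon H_1(B_s\setminus V_1;\Z)\to H_1(B_r\setminus V_2;\Z)$ the map in question, so that the whole problem reduces to exhibiting $u$ and $w$ making the two composites inclusion-induced isomorphisms.

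For the second ingredient, let $f_i$ generate the ideal of $V_i$ on $B_\varepsilon$. For $0<\rho_1<\rho_2<\varepsilon$ the inclusion $\iota\colon B_{\rho_1}\setminus V_i\hookrightarrow B_{\rho_2}\setminus V_i$ commutes with $f_i$, being the restriction of one and the same function, so $(f_i)_*\circ\iota_*=(f_i)_*$ on $H_1$; since by Lemma \ref{teo2} both copies of $(f_i)_*$ landing in $H_1(\C\setminus\{0\};\Z)$ are isomorphisms, $\iota_*$ is an isomorphism. Thus every inclusion of concentric punctured balls of radius $<\varepsilon$ induces an $H_1$-isomorphism, on each of the two sides (here the infinite cyclicity of these groups, from Lemma \ref{teo1}, is exactly the irreducibility hypothesis inherited from the ``as above'' clause).

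Now I would choose the auxiliary radius. Since $\varphi(B_s)$ is an open neighborhood of $0$, pick $0<r'<r$ with $B_{r'}\subset\varphi(B_s)$, equivalently $\varphi^{-1}(B_{r'})\subset B_s$. Because $\varphi$ carries $V_1$ onto $V_2$ and fixes $0$, it restricts to homeomorphisms of the relevant punctured sets, and I set $u=(\varphi^{-1})_*\colon H_1(B_{r'}\setminus V_2;\Z)\to H_1(B_s\setminus V_1;\Z)$ and $w=(\varphi^{-1})_*\colon H_1(B_r\setminus V_2;\Z)\to H_1(B_\varepsilon\setminus V_1;\Z)$, the latter being legitimate because $\varphi^{-1}(B_r)\subset B_\varepsilon$. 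The point is that in the composites the homeomorphism and its inverse cancel on points: $v\circ u$ is induced by $x\mapsto\varphi(\varphi^{-1}(x))=x$ and hence equals the inclusion $B_{r'}\setminus V_2\hookrightarrow B_r\setminus V_2$, while $w\circ v$ is induced by $x\mapsto\varphi^{-1}(\varphi(x))=x$ and equals the inclusion $B_s\setminus V_1\hookrightarrow B_\varepsilon\setminus V_1$. Both are inclusions of concentric punctured balls of radius $<\varepsilon$, hence isomorphisms by the previous paragraph, so the sandwich observation yields that $v=\varphi_*$ is an isomorphism.

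The delicate point to keep in mind --- and the reason the detour through $r'$ and $\varepsilon$ is necessary --- is that $\varphi(B_s)$ and $\varphi^{-1}(B_r)$ are merely homeomorphic images of balls, not balls, so one cannot apply the concentric-ball isomorphism to them directly. The entire arrangement is designed precisely so that these ``bad'' sets appear only as intermediate stages that cancel in the composites, leaving genuine concentric-ball inclusions to which Lemmas \ref{teo1} and \ref{teo2} apply; verifying the three radius inclusions ($\varphi^{-1}(B_{r'})\subset B_s$, $\varphi(B_s)\subset B_r$, $\varphi^{-1}(B_r)\subset B_\varepsilon$) is the only bookkeeping required.
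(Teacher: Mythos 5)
The paper offers no proof of this lemma: it is quoted verbatim from Ephraim's paper (Theorem 2.8 of \cite{Ephraim:1976a}), and your sandwich argument is precisely the standard proof of that result, so in substance your reconstruction is correct and follows the intended route --- the algebraic two-out-of-three observation, the $(f_i)_*$-cancellation trick from Lemma \ref{teo2} to show concentric punctured-ball inclusions induce $H_1$-isomorphisms, and the cancellation of $\varphi$ against $\varphi^{-1}$ so that the auxiliary ``bad'' sets $\varphi(B_s)$, $\varphi^{-1}(B_r)$ never appear as domains or targets. The radius bookkeeping ($\varphi^{-1}(B_{r'})\subset B_s$, $\varphi(B_s)\subset B_r$, $\varphi^{-1}(B_r)\subset B_\varepsilon$) and the functoriality identifications $v\circ u=\iota_*$, $w\circ v=\iota_*$ are all verified correctly.

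One step needs a patch. The composite $w\circ v$ is the inclusion $B_s\setminus V_1\hookrightarrow B_\varepsilon\setminus V_1$, whose outer radius is exactly $\varepsilon$, whereas Lemmas \ref{teo1} and \ref{teo2} are stated only for radii strictly less than $\varepsilon$; so your assertion that both composites are ``inclusions of concentric punctured balls of radius $<\varepsilon$'' fails for this one, and the $(f_i)_*$ trick does not apply to it as written. The gap is harmless because the sandwich only needs $w\circ v$ injective: if $\alpha\in H_1(B_s\setminus V_1;\Z)$ bounds in $B_\varepsilon\setminus V_1$, the bounding $2$-chain has compact image, hence lies in $B_\rho\setminus V_1$ for some $\rho$ with $s<\rho<\varepsilon$, and the inclusion $B_s\setminus V_1\hookrightarrow B_\rho\setminus V_1$ \emph{is} covered by your argument, forcing $\alpha=0$. (Equivalently, by compact supports $H_1(B_\varepsilon\setminus V_1;\Z)$ is the direct limit of the groups $H_1(B_\rho\setminus V_1;\Z)$ over $\rho<\varepsilon$ along inclusion-induced isomorphisms, so the inclusion at radius $\varepsilon$ is in fact an isomorphism.) With that one sentence added, your proof is complete.
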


We do not know the answer to the Question A2, however we have the following results.

\begin{theorem}\label{teo_zariski_h}
Let $F:\C^n\times [0,1]\to \C$ be a (not necessarily continuous) subanalytic function. Suppose that for each $t\in [0,1]$, $f_t:=F(\cdot,t):\C^n\to \C$ is a (not necessarily reduced) complex homogeneous polynomial. If $\{V(f_t)\}_{t\in [0,1]}$ is a topologically $V$-equisingular family, then it is equimultiple.
\end{theorem}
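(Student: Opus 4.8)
The plan is to prove that the integer-valued function $t\mapsto m(V(f_t),0)$ is locally constant on $[0,1]$; since $[0,1]$ is connected, this forces $m(V(f_t),0)=m(V(f_0),0)$ for every $t$. Since $V(f_t)=V(f_t^{{\rm red}})$ and the multiplicity of a hypersurface depends only on its reduced equation, I would first replace each $f_t$ by its reduced part, so that $m(V(f_t),0)=\deg{f_t}$ becomes exactly the degree of the reduced homogeneous polynomial $f_t$, i.e. the degree of the projective hypersurface it cuts out in $\CP^{n-1}$. The whole statement is thus the assertion that this degree does not vary along the family, and the subanalyticity of $F$ is used only to guarantee that the sets $V(f_t)$ and the total space $F^{-1}(0)$ are tame enough for the curve selection lemma, Ephraim's lemmas and the Milnor fibration theory to apply.

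Next I would record the structure carried by the trivialization. For each $t$ the hypothesis supplies a homeomorphism $\varphi_t\colon(\C^n,V(f_t),0)\to(\C^n,V(f_0),0)$, and these assemble into the continuous $\varphi$ with (after the normalization in the Remark) $\varphi_0=\id$. By Lemma \ref{irredutivel} each $\varphi_t$ maps the irreducible components of $V(f_t)$ bijectively onto those of $V(f_0)$, so the number $s$ of components is independent of $t$; by Lemma \ref{teo3} each $\varphi_t$ induces an isomorphism $H_1(B_s\setminus V(f_t);\Z)\to H_1(B_r\setminus V(f_0);\Z)$, and a local application of Lemmas \ref{teo2} and \ref{teo1} around a generic (hence smooth) point of each component shows that $\varphi_t$ takes the meridian of a component of $V(f_t)$ to the meridian of the matched component of $V(f_0)$. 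The key difficulty already surfaces here: this homological data does \emph{not} detect the degree, because for an irreducible hypersurface one has $H_1(B_r\setminus V;\Z)\cong\Z$ by Lemma \ref{teo1} no matter what the degree is. A finer, fibration-theoretic invariant is required.

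That finer invariant is the global Milnor fibration of the homogeneous polynomial: $f_t\colon\C^n\setminus V(f_t)\to\C^*$ is a locally trivial fibration whose geometric monodromy is $h_{f_t}(z)=e^{2\pi i/\deg{f_t}}z$, of order exactly $\deg{f_t}=m(V(f_t),0)$. For any value of $t$ with $\chi(F_{f_t})\neq 0$ there is nothing to do: Proposition \ref{acampo_type} applies verbatim to the single homeomorphism $\varphi_t$ and yields $m(V(f_t),0)=m(V(f_0),0)$. The whole content of the theorem is therefore concentrated on the locus where $\chi(F_{f_t})=0$, and there I would use the \emph{continuity} of $\varphi$ rather than $\varphi_t$ in isolation: continuity of $\varphi$ together with the fact that each $\varphi_t$ and $\varphi_t^{-1}$ is a homeomorphism promotes $\{(\C^n,V(f_t),0)\}$ to a topologically trivial family, i.e. a level-preserving ambient homeomorphism of $\C^n\times[0,1]$ carrying $\bigcup_t V(f_t)\times\{t\}$ to $V(f_0)\times[0,1]$. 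Such an ambient isotopy transports the Milnor fibration of $f_t$ near its link to that of $f_0$ up to fibre-preserving homotopy, conjugating the geometric monodromies; their orders then coincide, giving $\deg{f_t}=\deg{f_0}$ and closing the remaining case. Combining the two cases, $\{t\colon m(V(f_t),0)=m(V(f_0),0)\}$ is open and closed in $[0,1]$, hence all of $[0,1]$.

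The main obstacle is exactly the transport in the last step. A single homeomorphism of pairs induces only a homotopy equivalence of complements, which is in general incompatible with the Milnor fibrations -- indeed the plain topological invariance of the degree of a homogeneous hypersurface is the still-open Question A2 -- so the argument must genuinely consume the continuous family and not merely the individual $\varphi_t$. Moreover, reducing component-by-component via Remark \ref{multip} and the matching above only lands one back on Question A2 for irreducible homogeneous hypersurfaces, which is precisely why the \emph{total} degree has to be controlled globally through the fibration. Two points need care: first, deducing that $\varphi$ provides an ambient isotopy even though $F$, and hence the set $V(f_t)$, may jump discontinuously with $t$ (this must be extracted from the continuity of $\varphi$ and the properness of $(z,t)\mapsto(\varphi_t(z),t)$, never from any continuity of $F$); and second, verifying that an ambient isotopy of the pairs preserves the conjugacy class, and in particular the order, of the geometric monodromy. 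I expect the cleanest route to the second point is again a connectedness argument feeding off Proposition \ref{acampo_type}, showing that the multiplicity cannot jump at a parameter where $\chi(F_{f_t})$ degenerates without contradicting the triviality of the transported fibration.
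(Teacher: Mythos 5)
Your proof would fail at the decisive step, and the scaffolding around it is also unsound. The case $\chi(F_{f_t})=0$ rests entirely on the assertion that the ambient isotopy ``transports the Milnor fibration of $f_t$ near its link to that of $f_0$ up to fibre-preserving homotopy, conjugating the geometric monodromies.'' No mechanism for this is given, and none is available: $V$-equisingularity matches only the zero sets, not the functions, so the homeomorphisms $\varphi_t$ induce no map whatsoever between the fibrations defined by $f_t$ and $f_0$; and even a fibre-preserving homotopy equivalence would only control the homotopy class of the monodromy, not its order as a homeomorphism, so extracting the degree this way is exactly as hard as the open Question A2 that you yourself flag. Your fallback---showing $\{t: m(V(f_t),0)=m(V(f_0),0)\}$ is open and closed---cannot even get started, because $F$ is not assumed continuous in $t$: neither the degrees of $f_t$, nor $\chi(F_{f_t})$, nor the equimultiple locus has any a priori regularity in $t$, so ``locally constant plus connected'' is unavailable. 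The paper's proof never argues by connectedness at all; it proves $m(V(f_{t_0}),0)=m(V(f_0),0)$ for each fixed $t_0$ directly, with no case split on $\chi(F_{f_t})$ and no use of Proposition \ref{acampo_type}.

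What is missing is the paper's actual device, and with it the correct role of subanalyticity, which you misattribute to mere tameness needed for Ephraim's lemmas and Milnor theory (those apply to any polynomial). Subanalyticity of $F$ is used, via a Hausdorff-measure/co-area computation on $V=F^{-1}(0)\subset\C^n\times[0,1]$, to show $\mathcal{H}^{2n}\bigl(\bigcup_{t} V(f_t)\bigr)=0$, whence there exists a \emph{single} complex line $L$ with $L\cap V(f_t)=\{0\}$ for \emph{every} $t$ simultaneously. On a punctured disc $\mathbb{D}_L\subset L$, homogeneity gives $(f_{t_0}|_{\mathbb{D}_L})_*(\gamma)=\pm\, m(V(f_{t_0}),0)$ on a generator $\gamma$ of $H_1(\mathbb{D}_L;\Z)$; after reducing to irreducible $f_t$ via Lemma \ref{irredutivel}, Ephraim's Lemmas \ref{teo1}--\ref{teo3} convert this into $(f_0\circ\varphi_{t_0}|_{\mathbb{D}_L})_*(\gamma)=\pm\, m(V(f_{t_0}),0)$. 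The continuity of the family is then consumed exactly once, in an elementary way: $H(z,\lambda)=f_0\bigl(\varphi(z,\lambda t_0)\bigr)$ is a homotopy through $\C\setminus\{0\}$ (legitimate precisely because $L$ misses every $V(f_t)$, and $\varphi_0=\id$), forcing this winding number to equal $\pm\, m(V(f_0),0)$. The multiplicity is thus captured as a winding number over one fixed circle, not as a monodromy order, and no fibration is ever transported. You correctly identified the ingredients (Lemma \ref{irredutivel}, Lemmas \ref{teo1}--\ref{teo3}, and that the whole continuous family, not the individual $\varphi_t$, must be used), but the bridge you propose between them is the one step that genuinely cannot be crossed.
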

\begin{proof}
Let $\varphi:U\times [0,1] \to \C^n$ be a continuous map such that $\varphi_t:=\varphi(\cdot,t):U \to \varphi(U\times \{t\})$ is a homeomorphism, $\varphi(0,t)=0$ and 
$$\varphi_t(V(f_t))=V(f_0),\quad \mbox{ for all }t\in [0,1].$$

Note that there is $v\in \C^n \setminus \bigcup \limits _{t\in [0,1]}V(f_t)$. In fact, we denote $V=F^{-1}(0)\subset \C^n\times [0,1]$ and $V_t=V(f_t)\times \{t\}$, then $V=\bigcup\limits _{t\in [0,1]} V_t$. Moreover, $V\setminus \Sing{V}$ is a smooth submanifold of $\C^n\times [0,1]=\R^{2n}\times [0,1]$, then for each $x\in V\setminus \Sing{V}$, there are a neighborhood $U_x$ and a diffeomorphim $\phi_x: B_2(0)\subset \R^{m}\to U_x$, where $m$ is the dimension of $V\setminus \Sing{V}$. Using the co-area formula, we obtain that
$$
\mathcal{H}^{2n+1}(V)=\int_V\|\nabla p(x)\|dx=\int_0^1\mathcal{H}^{2n}(V\cap p^{-1}(t))dt=0,
$$
where $p: \C^n\times [0,1]\to [0,1]$ is the canonical projection (here, $\mathcal{H}^{k}(X)$ denote the Hausdorff measure $k$-dimensional of the set $X$). The last equality is why $V(f_t)$ is a complex algebraic set with dimension $n-1$ and, in particular, $\mathcal{H}^{2n-1}(V(f_t))=\mathcal{H}^{2n}(V(f_t))=0$. Therefore, $m<2n+1$. 

Suppose that $m=2n$. Using the co-area formula once more, we obtain that
$$
\int_{\overline{B_1(0)}}\|\nabla h(x)\|dx=\int_{h( \overline{B_1(0)})}\mathcal{H}^{2n-1}(h^{-1}(t))dt=0,
$$
where $h=p\circ \phi_x:\overline{B_1(0)}\to [0,1]$. However, $\|\nabla h(x)\|\not\equiv 0$, then $\mathcal{H}^{2n}(\overline{B_1(0)})=0$, but this is a contradiction. Then, $m\leq 2n-1$ and, therefore, $\mathcal{H}^{2n}(V)=0$.

Thus, $\mathcal{H}^{2n}(\bigcup \limits _{t\in [0,1]}V(f_t))=0$, since the canonical projection $\pi:\C^n\times [0,1] \to \C^n$ is a Lipschitz map and $\pi(V)=\bigcup \limits _{t\in [0,1]}V(f_t)$. In particular, $\bigcup \limits _{t\in [0,1]}V(f_t)\subsetneq \C^n$. 

Let $L\subset \C^n$ be a complex line given by $L=\{\lambda v;\,\lambda\in \C\}$. Then $L\cap (\bigcup \limits _{t\in [0,1]}V(f_t))=\{0\}$. Moreover, by Lemma \ref{irredutivel}, we can suppose that $f_t$ is an irreducible polynomial, for all $t\in [0,1]$.

Fixed $t_0\in [0,1]$, choose $0<r,s< \varepsilon $ as in the Lemma \ref{teo3} and let $\gamma$ be a generator of $H_1(\mathbb{D}_L;\Z)$, where $\mathbb{D}_L:=\{z\in L;\, 0<\|z\|\leq \delta\}\subset B_r\cap B_s$ and $B_{\varepsilon }\subset U$. Then, $(f_{t_0}|_{\mathbb{D}_L})_*(\gamma)= \pm m(V(f_{t_0}),0)$ and $(f_0|_{\mathbb{D}_L})_*(\gamma)= \pm m(V(f_0),0)$. In particular, $i_*(\gamma)=\pm m(V(f_{t_0}),0)$, where $i:\mathbb{D}_L\to B_s\setminus V(f_{t_0})$ is the inclusion map, since $(f_{t_0})_*:H_1(B_s\setminus V(f_{t_0});\Z)\to H_1(\C\setminus \{0\};\Z)$ is an isomorphism. 

However, $(\varphi_{t_0})_*:H_1(B_s\setminus V(f_{t_0});\Z)\to H_1(B_{\varepsilon}\setminus V(f_0);\Z)$ is also an isomorphism, then $({\varphi_{t_0}})_*(i_*(\gamma))=\pm m(V(f_{t_0}),0)$. Therefore, 
\begin{equation}\label{mult_gamma}
(f_0\circ \varphi_{t_0}|_{\mathbb{D}_L})_*(\gamma)=\pm m(V(f_{t_0}),0),
\end{equation}
since $(f_0)_*:H_1(B_{\varepsilon}\setminus V(f_0);\Z)\to H_1(\C\setminus \{0\});\Z)$ is an isomorphism, as well.

\noindent {\bf Claim.} $f_0\circ \varphi_{t_0}|_{\mathbb{D}_L}$ is homotopic to $f_0|_{\mathbb{D}_L}$.

In fact, $L\cap V(f_t)=\{0\}$ for all $t\in [0,1]$ and, in particular, for each $t\in [0,1]$, $f_t(w)\not =0$ for all $w\in \mathbb{D}_L$. Thus, the function $H:\mathbb{D}_L\times [0,1]\to \C\setminus \{0\}$ given by $H(z,\lambda)=f_0\circ \varphi(z,\lambda t_0)$ is a homotopy between $f_0|_{\mathbb{D}_L}$ and $f_0\circ \varphi|_{\mathbb{D}_L}$, since $\varphi_{0}=id$. In particular, $(f_0\circ \varphi|_{\mathbb{D}_L})_*=(f_0|_{\mathbb{D}_L})_*$. Then
$$
\pm m(V(f_{t_0}),0)\overset{(\ref*{mult_gamma})}{=} (f_0\circ\varphi_{t_0}|_{\mathbb{D}_L})_*(\gamma)=(f_0|_{\mathbb{D}_L})_*(\gamma)= \pm m(V(f_0),0).
$$
Therefore, $m(V(f_{t_0}))= m(V(f_0))$.
\end{proof}

\begin{theorem}\label{teo_zariski_hf}
Let $F:\C^n\times [0,1]\to \C$ be a (not necessarily continuous) subanalytic function. Suppose that for each $t\in [0,1]$, $f_t:=F(\cdot,t):\C^n\to \C$ is a (not necessarily reduced) complex homogeneous polynomial. If $\{f_t\}_{t\in [0,1]}$ is a topologically $\mathcal{R}$-equisingular family, then it is equimultiple.
\end{theorem}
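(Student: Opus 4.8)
The plan is to adapt the argument of Theorem \ref{teo_zariski_h}, exploiting that topological $\mathcal{R}$-equisingularity provides the strictly stronger relation $f_t=f_0\circ\varphi_t$ between the \emph{functions} themselves, and not merely between their zero sets. This extra information will let me read off the order ${\rm ord}_0(f_t)$ directly as a winding number, rather than only the reduced multiplicity $m(V(f_t),0)$; consequently the Ephraim Lemmas \ref{teo1}--\ref{teo3} are not needed here, and the possibly non-reduced case is covered automatically. Since each $f_t$ is homogeneous, ${\rm ord}_0(f_t)=\deg(f_t)$, so equimultiplicity of $\{f_t\}$ is exactly the statement ${\rm ord}_0(f_{t_0})={\rm ord}_0(f_0)$ for every $t_0\in[0,1]$.

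First I would produce a generic line. Exactly as in the proof of Theorem \ref{teo_zariski_h}, the subanalyticity of $F$ together with the co-area computation shows that $\bigcup_{t\in[0,1]}V(f_t)=\pi(F^{-1}(0))$ has vanishing $2n$-dimensional Hausdorff measure, hence is a proper subset of $\C^n$. I therefore choose $v\in\C^n\setminus\bigcup_{t}V(f_t)$, set $L=\{\lambda v;\,\lambda\in\C\}$, and take a small $\mathbb{D}_L=\{z\in L;\,0<\|z\|\leq\delta\}$ sitting inside the domain $U$ of $\varphi$, with $\gamma$ a generator of $H_1(\mathbb{D}_L;\Z)\cong\Z$. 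Because $f_t$ is homogeneous of degree $d_t={\rm ord}_0(f_t)$, its restriction to $L$ is $f_t(\lambda v)=\lambda^{d_t}f_t(v)$ with $f_t(v)\neq 0$; hence the induced map $(f_t|_{\mathbb{D}_L})_*\colon H_1(\mathbb{D}_L;\Z)\to H_1(\C\setminus\{0\};\Z)$ is multiplication by the winding number $d_t$, that is $(f_t|_{\mathbb{D}_L})_*(\gamma)=\pm\,{\rm ord}_0(f_t)$, regardless of whether $f_t$ is reduced.

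Next, fixing $t_0\in[0,1]$, I would run the same homotopy used in Theorem \ref{teo_zariski_h}: define $H\colon\mathbb{D}_L\times[0,1]\to\C$ by $H(z,\lambda)=f_0(\varphi(z,\lambda t_0))$. Since $f_0\circ\varphi_s=f_s$ and $\mathbb{D}_L\cap V(f_s)=\emptyset$ for every $s$, the homotopy lands in $\C\setminus\{0\}$, and its endpoints are $H(\cdot,0)=f_0\circ\varphi_0=f_0$ (using $\varphi_0=\id$) and $H(\cdot,1)=f_0\circ\varphi_{t_0}=f_{t_0}$. Thus $f_{t_0}|_{\mathbb{D}_L}$ and $f_0|_{\mathbb{D}_L}$ are homotopic as maps into $\C\setminus\{0\}$, so $(f_{t_0}|_{\mathbb{D}_L})_*=(f_0|_{\mathbb{D}_L})_*$. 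Combining this with the winding-number computation yields $\pm\,{\rm ord}_0(f_{t_0})=\pm\,{\rm ord}_0(f_0)$, and as orders are positive, ${\rm ord}_0(f_{t_0})={\rm ord}_0(f_0)$; since $t_0$ was arbitrary, the family is equimultiple.

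The conceptual point, and the only place any genuine work is hidden, is the identification in the second step of $(f_t|_{\mathbb{D}_L})_*$ with the order rather than the reduced multiplicity. In Theorem \ref{teo_zariski_h} one has only $\varphi_t(V(f_t))=V(f_0)$, so the degree must be routed through the Ephraim isomorphisms on $H_1$ of the complements, which see nothing beyond the reduced divisor and therefore deliver $m(V(f_t),0)$. Here the functional equality $f_t=f_0\circ\varphi_t$ removes that detour completely, which is precisely what upgrades the conclusion from equality of reduced multiplicities to equality of orders and thereby handles non-reduced $f_t$. I expect the only technical care required is to choose $\delta$ and the auxiliary balls uniformly in $t$ so that $\mathbb{D}_L\subset U$ and the homotopy $H$ is defined and nonvanishing throughout $\mathbb{D}_L\times[0,1]$.
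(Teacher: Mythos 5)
Your proposal is correct and follows essentially the same route as the paper's own proof: the same generic line $L$ obtained from the co-area argument, the same homotopy $H(z,\lambda)=f_0(\varphi(z,\lambda t_0))$ landing in $\C\setminus\{0\}$ because $f_0\circ\varphi_s=f_s$ does not vanish on $\mathbb{D}_L$, and the same conclusion via homotopy invariance of the degree of $f_t|_{\mathbb{D}_L}$, which equals ${\rm ord}_0(f_t)$ by homogeneity. Your phrasing via $(f_t|_{\mathbb{D}_L})_*$ on $H_1$ is just the paper's $\partial_{\rm top}$ in different notation, and your explicit computation $f_t(\lambda v)=\lambda^{d_t}f_t(v)$, together with the remark that the Ephraim lemmas are thereby bypassed in the non-reduced case, merely makes explicit what the paper leaves implicit.
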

\begin{proof}
Let $\varphi:U\times [0,1] \to \C^n$ be a continuous map such that $\varphi_t:=\varphi(\cdot,t):U \to \varphi(U\times \{t\})$ is a homeomorphism, $\varphi(0,t)=0$ and $f_t=f_0\circ\varphi_t$ for all $t\in [0,1]$.

We have that there is $v\in \C^n \setminus \bigcup \limits _{t\in [0,1]}V(f_t)$.
Let $L\subset \C^n$ be a complex line given by $L=\{\lambda v;\,\lambda\in \C\}$. Then $L\cap (\bigcup \limits _{t\in [0,1]}V(f_t))=\{0\}$. 

Fixed $t_0\in [0,1]$, we have $\partial_{{\rm top}}(f_{t_0}|_{\mathbb{D}_L})= {\rm ord}_0 (f)$ and $\partial_{{\rm top}}(f_0|_{\mathbb{D}_L})={\rm ord}_0 (f_0)$, where $\mathbb{D}_L:=\{z\in L;\, 0<\|z\|\leq \delta\}$. Moreover, $\partial_{{\rm top}}(f_0\circ\varphi_{t_0}|_{\mathbb{D}_L})=\partial_{{\rm top}}(f_{t_0}|_{\mathbb{D}_L})= {\rm ord}_0 (f_{t_0})$, since $f_{t_0}=f_0\circ \varphi_{t_0}$.

\noindent {\bf Claim.} $f_{t_0}|_{\mathbb{D}_L}$ is homotopic to $f_0|_{\mathbb{D}_L}$.

In fact, $L\cap V(f_t)=\{0\}$ for all $t\in [0,1]$ and, in particular, for each $t\in [0,1]$, $f_t(w)\not =0$ for all $w\in \mathbb{D}_L$. Thus, the function $H:\mathbb{D}_L\times [0,1]\to \C\setminus \{0\}$ given by $H(z,\lambda)=f_0\circ \varphi(z,\lambda t_0)$ is a homotopy between $f_0|_{\mathbb{D}_L}$ and $f_{t_0}|_{\mathbb{D}_L}$, since $f_{t_0}=f_0\circ \varphi_{t_0}$ and $f_0=f_0\circ \varphi_0$. In particular, $\partial_{{\rm top}}(f|_{\mathbb{D}_L})=\partial_{{\rm top}}(f_0|_{\mathbb{D}_L})$.

Then, ${\rm ord}_0 (f_{t_0})= {\rm ord}_0 (f_0)$.
\end{proof}

\begin{remark}
We finish this Section remarking that the Corollaries \ref{mult_dim_one_blow}, \ref{mult_surface_blow}, and \ref{mult_aligned_blow} are true as well when we consider a bi-Lipschitz homeomorphism instead of a blow-spherical homeomorphism, using the Theorem 2.1 in \cite{FernandesS:2016} instead of the Theorem \ref{reduction}.
\end{remark}

\section{Classification of complex analytic curves in the space}\label{section:curves}

In this Section, we prove that the blow-spherical geometry and the analytic multiplicity are essentially the same object, in the case of complex analytic curves.
\begin{theorem}
Two irreducible analytic curves are blow-isomorphic if, and only if, they have the same multiplicity.
\end{theorem}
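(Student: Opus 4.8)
The plan is to reduce the multiplicity of an irreducible curve to a single blow-spherical invariant and then to realize any prescribed value of it by an explicit parametrized blow-isomorphism. The first point is the geometric fact that the tangent cone $C(X,0)$ of an irreducible analytic curve germ $(X,0)\subset\C^n$ is a single complex line $L$: taking a Puiseux parametrization $\gamma_X(t)=t^m v+o(t^m)$ with $m=m(X,0)$ and $v\neq 0$, every secant direction converges to $\C v$, so $C(X,0)=\C v=:L$, which is smooth and hence has $m(L,0)=1$. Combined with Remark \ref{multip} this gives the key identity $m(X,0)=k_X(L)\cdot m(L,0)=k_X(L)$, and likewise $m(Y,0)=k_Y(C(Y,0))$ for a second irreducible curve $Y$.

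Given this identity, the forward implication is immediate. If $\varphi\colon(X,0)\to(Y,0)$ is a blow-isomorphism, then Proposition \ref{defi_diferencial} produces a blow-isomorphism $d_0\varphi\colon C(X,0)\to C(Y,0)$ between the two tangent lines, and Theorem \ref{multiplicities} in the case $r=1$ gives $k_X(C(X,0))=k_Y(C(Y,0))$. By the identity above this says exactly that $m(X,0)=m(Y,0)$.

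For the converse I would build the blow-isomorphism by hand. Suppose $m(X,0)=m(Y,0)=m$, and parametrize the curves by $\gamma_X(t)=t^m v_X+o(t^m)$ and $\gamma_Y(t)=t^m v_Y+o(t^m)$, where $v_X,v_Y$ span the tangent lines $L_X,L_Y$. Since any two complex lines of $\C^n$ are interchanged by a unitary map, which is a blow-isomorphism by Proposition \ref{diferblow}, I may assume $L_X=L_Y=L$. I then define $\varphi\colon X\to Y$ by $\varphi(\gamma_X(t))=\gamma_Y(t)$; this is a homeomorphism since each Puiseux map is a homeomorphism of the disk onto its curve. Writing $t=se^{i\theta}$, the spherical projection $\gamma_Y(t)/\|\gamma_Y(t)\|$ tends, uniformly in $\theta$ as $s\to 0^+$, to $e^{im\theta}\,v_Y/\|v_Y\|$, and similarly for $X$. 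Hence $\rho^{-1}\circ\varphi\circ\rho$ extends continuously over $\partial X'$ by the boundary map $\nu_0\varphi\big(e^{im\theta}v_X/\|v_X\|\big)=e^{im\theta}v_Y/\|v_Y\|$, which is a homeomorphism of the circle $\Sp_0 L$. The same argument applied to $\varphi^{-1}$ shows that $\varphi'\colon X'\to Y'$ is a homeomorphism, so $\varphi$ is a blow-isomorphism.

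The step I expect to be the main obstacle is the well-definedness and continuity of this boundary extension in the converse. One must check that the $m$-fold winding created by the leading factor $t^m$ is the \emph{same} for both curves, governed only by the common multiplicity $m$, so that the indeterminacy $\theta\mapsto\theta+2\pi/m$ on the source matches the indeterminacy on the target and $\nu_0\varphi$ is genuinely single-valued. The uniform convergence of the spherical projection, which is exactly what $\gamma(t)=t^m v+o(t^m)$ provides, is then what upgrades the radial limits to honest continuity of $\varphi'$ on all of $X'$.
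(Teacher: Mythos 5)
Your proposal is correct and follows essentially the same route as the paper: the forward implication via Proposition \ref{defi_diferencial}, Theorem \ref{multiplicities} and Remark \ref{multip} (the tangent cone of an irreducible curve being a line, of multiplicity one), and the converse by aligning the tangent lines by a linear map and conjugating Puiseux parametrizations, $\varphi=\gamma_Y\circ\gamma_X^{-1}$, then extending over the boundary of the strict transform. Your uniform-in-$\theta$ winding argument is a polar-coordinate rephrasing of the paper's sequence computation showing that $\varphi'$ extends as the identity on $\partial X'=\partial Y'$, and the well-definedness issue you flag is handled there by both parametrizations sharing the same leading term $t^{k}$ with $k$ the common multiplicity.
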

\begin{proof}
Let $X,\T{X}\subset\C^n$ be two irreducible analytic curves. By Theorem \ref{multiplicities}, we have that if $X$ and $\T{X}$ are blow-isomorphic, then they have the same multiplicity, since $m(C(X,0))=m(C(\T{X},0))=1$. Suppose that $k=m(X,0)=m(\T{X},0)$. After a change of coordinates linear, if necessary, we can suppose that the tangent cone of $X$ and of $\T{X}$ is $\{(\xi ,0)\in\C^n;\, \xi\in\C\}$. Let $\psi:\D_{\varepsilon }\to X$ and $\T{\psi }:\D_{\varepsilon }\to \T{X}$ the Puiseux's parametrizations of $X$ and $\T{X}$, resp., given by
$$
\psi(t)=(t^k,\phi(t))=(t^k,\phi_2(t),...,\phi_n(t))
$$
and
$$
\T{\psi}(t)=(t^k,\T{\phi}(t))=(t^k,\T{\phi}_2(t),...,\T{\phi}_n(t)),
$$
where $ord_0\phi_i>k$ and $ord_0\T{\phi}_i>k$, for $i=2,...,n$.
Define $\varphi:X\to \T{X}$ by $\varphi=\T{\psi}\circ \psi^{-1}$.
Let $z_m=(\x_m,t_m)\in X'\setminus \partial X'$ s.t. $z_m\to (\x,0)$. Then $\lim\limits _{m\to\infty }\varphi'(z_m)=(\x,0)$. In fact, with $s_m=\psi^{-1}(t_m\x_m)$, we have
\begin{eqnarray*}
 \varphi'(z_m) &=& \left(\frac{\varphi(t_m\x_m)}{\|\varphi(t_m\x_m)\|},\|\varphi(t_m\x_m)\|\right)\\
			   &=& \left(\frac{(s_m^k,\T{\phi}(s_m))}{\|(s_m^k,\T{\phi}(s_m))\|},\|(s_m^k,\T{\phi}(s_m))\|\right).
\end{eqnarray*}
But $t_m\x_m=(s_m^k,\phi(s_m))$ and, hence, 
$$
z_m=\left(\frac{(s_m^k,\phi(s_m))}{\|(s_m^k,\phi(s_m))\|},\|(s_m^k,\phi(s_m))\|\right).
$$
Then, $\lim\limits _{m\to\infty }\varphi'(z_m)=\lim\limits _{m\to\infty }z_m=(\x,0)$. Thus, $\varphi'$ extends continuously to identity on $\partial X'=\partial Y'$.
\end{proof}
Let $X,Y\subset \C^n$ be two complex analytic curves. Let $X_1,...,X_r\subset \C^n$ be the irreducible components of $X$ and let $Y_1,...,Y_s\subset \C^n$ be the irreducible components of $Y$. Then, we have the following
\begin{corollary}
$X$ and $Y$ are blow-spherical equivalents if and only if there is a bijection $\sigma:\{1,...,r\}\to\{1,...,s\}$ such that 
\begin{itemize}
\item [1)] $m(X_i,0)=m(Y_{\sigma(i)},0)$, for all $i=1,...,r$.
\item [2)] there is a homeomorphism $h:(C(X,0),0)\to (C(Y,0),0)$ satisfying $h(C(X_i,0))=C(Y_{\sigma(i)},0)$, for all $i=1,...,r$.
\end{itemize}
\end{corollary}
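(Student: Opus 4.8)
The plan is to prove the two implications separately, using the preceding Theorem (blow-isomorphism of irreducible curves $\Leftrightarrow$ equal multiplicity) as the engine on each irreducible component, and using Lemma \ref{irredutivel} together with Proposition \ref{defi_diferencial} to organize the components.

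For the direct implication I would start from a blow-isomorphism $\varphi:X\to Y$ with extension $\varphi':X'\to Y'$. By Lemma \ref{irredutivel}, $\varphi$ carries irreducible components to irreducible components, so $r=s$ and there is a bijection $\sigma$ with $\varphi(X_i)=Y_{\sigma(i)}$. Restricting, $\varphi|_{X_i}:X_i\to Y_{\sigma(i)}$ is again a blow-isomorphism (its extension is $\varphi'|_{X_i'}$, which maps $\partial X_i'$ onto $\partial Y_{\sigma(i)}'$), so the preceding Theorem yields $m(X_i,0)=m(Y_{\sigma(i)},0)$, which is condition $(1)$. For condition $(2)$, Proposition \ref{defi_diferencial} produces the homeomorphism $h:=d_0\varphi:C(X,0)\to C(Y,0)$; since $d_0\varphi$ is the cone over the boundary map $\nu_0\varphi=\varphi'|_{\partial X'}$ and $\varphi'$ sends $\partial X_i'=\Sp_0X_i\times\{0\}$ onto $\partial Y_{\sigma(i)}'$, the tangent directions of $X_i$ go to those of $Y_{\sigma(i)}$, giving $h(C(X_i,0))=C(Y_{\sigma(i)},0)$.

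For the converse, conditions $(1)$–$(2)$ are given and I would build a blow-isomorphism by gluing. By $(1)$ and the preceding Theorem, each $X_i$ is blow-isomorphic to $Y_{\sigma(i)}$; the difficulty is that distinct components may share a tangent line, so their strict transforms meet $\partial X'$ along the same circle and the component maps must agree there. Here $(2)$ is exactly what makes the gluing possible: since $h$ respects components, $X_i$ and $X_j$ share a tangent line if and only if $Y_{\sigma(i)}$ and $Y_{\sigma(j)}$ do. I would then treat each group of components with a common tangent line $\ell$ (mapping to a common line $\ell'=h(\ell)$ in $C(Y,0)$) simultaneously: after a linear change of coordinates in the source sending $\ell$ to $\{(\xi,0,\dots,0)\}$ and one in the target sending $\ell'$ to $\{(\xi,0,\dots,0)\}$, I build each $\varphi_i$ in the group from Puiseux parametrizations exactly as in the preceding Theorem, so that its extension restricts to the \emph{identity} on the common boundary circle. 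Consequently all component maps in a group induce the same boundary homeomorphism, while across different groups the boundary circles are disjoint, so the separate coordinate normalizations do not conflict.

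Defining $\varphi:X\to Y$ by $\varphi|_{X_i}=\varphi_i$ then gives a well-defined map (the components meet only at $0$, which is fixed), whose extension $\varphi'$ is a continuous bijection of $X'$ onto $Y'$ agreeing on every shared boundary circle; building the inverse symmetrically shows $\varphi'$ is a homeomorphism, so $\varphi$ is a blow-isomorphism. The main obstacle is precisely this last gluing: verifying that $\varphi'$ is continuous and bijective at boundary points lying on a tangent line shared by several components. The device that resolves it is the coordinate normalization making each component's boundary extension the identity, which forces the separate sheets of $X'$ to match, correctly and continuously, along the shared circle.
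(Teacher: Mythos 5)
The paper states this corollary without any proof (it appears immediately after the theorem on irreducible curves, and the bibliography follows), so there is no authorial argument to compare against; your proposal supplies a correct proof, and it is the natural derivation the paper evidently has in mind. The forward direction is fine: Lemma \ref{irredutivel} gives the bijection $\sigma$ with $\varphi(X_i)=Y_{\sigma(i)}$, the restriction $\varphi|_{X_i}$ is again a blow-isomorphism because $\varphi'$ carries $\rho^{-1}(X_i\setminus\{0\})$ onto $\rho^{-1}(Y_{\sigma(i)}\setminus\{0\})$ and hence $X_i'$ onto $Y_{\sigma(i)}'$, and $h:=d_0\varphi$ from Proposition \ref{defi_diferencial} respects components since $\varphi'(\partial X_i')=\partial Y_{\sigma(i)}'$. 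The converse is where the real content lies, and your gluing device is sound: since the punctured lines are exactly the connected components of $C(X,0)\setminus\{0\}$, condition $2)$ forces $X_i$ and $X_j$ to share a tangent line precisely when $Y_{\sigma(i)}$ and $Y_{\sigma(j)}$ do; fixing one linear change $A$ per group in the source and one $B$ per group in the target, the component maps $\varphi_i=B^{-1}\circ\theta_i\circ A$ (with $\theta_i$ the theorem's Puiseux construction, whose extension is the identity on the normalized boundary circle) all induce the boundary map $u\mapsto B^{-1}Au/\|B^{-1}Au\|$ on the shared circle, which is independent of $i$, so the extensions agree on overlaps and the pasting lemma gives a continuous $\varphi'$; circles belonging to distinct lines are disjoint, so bijectivity holds and the symmetric construction of the inverse finishes it. The only cosmetic caveat is that your phrase ``restricts to the identity'' holds in the normalized coordinates, not for $\varphi_i$ itself; what matters, as your computation implicitly shows, is that the conjugated boundary map is common to the whole group.
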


\end{document}